\newtheorem{theorem}{Theorem}[section]
\newtheorem{thmx}{Theorem} 
\newtheorem{lemma}[theorem]{Lemma}
\newtheorem{proposition}[theorem]{Proposition}
\newtheorem{conjecture}[theorem]{Conjecture}
\newtheorem*{theorem*}{Theorem}
\theoremstyle{definition}
\newtheorem{definition}[theorem]{Definition}
\newtheorem{theorem-definition}[theorem]{Theorem-Definition}
\theoremstyle{remark}
\newtheorem{remark}[theorem]{Remark}
\newtheorem{example}[theorem]{Example}
\newtheorem{question}[theorem]{Question}
\numberwithin{equation}{section}
\begin{document}
\title[Characterization of abelian varieties for log pairs]{On the characterization of abelian varieties for log pairs in zero and positive characteristic}
\author{Yuan Wang}
\subjclass[2010]{ 
14E15, 14J10, 14J17, 14J30.
}
\keywords{
abelian variety, Albanese map, Albanese morphism, canonical bundle formula, generic vanishing, Kodaira dimension, positive characteristic.
}
\address{Department of Mathematics, Northwestern University, 2033 Sheridan Road
Evanston, IL  60208, USA}
\email{yuanwang@math.northwestern.edu}
\thanks{The author was supported in part by the NSF research grant DMS-\#1300750 and the Simons Foundation Award \# 256202.}
\begin{abstract}
Let $(X,\Delta)$ be a pair. We study how the values of the log Kodaira dimension and log plurigenera relates to surjectivity and birationality of the Albanese map and the Albanese morphism of $X$ in both characteristic $0$ and characteristic $p > 0$. In particular, we generalize some well known results for smooth varieties in both zero and positive characteristic to varieties with various types of singularities. Moreover, we show that if $X$ is a normal projective threefold in characteristic $p>0$, the coefficients of the components of $\Delta$ are $\le 1$ and $-(K_X+\Delta)$ is semiample, then the Albanese morphism of $X$ is surjective under reasonable assumptions on $p$ and the singularities of the general fibers of the Albanese morphism. This is a positive characteristic analog in dimension 3 of a result of Zhang on a conjecture of Demailly-Peternell-Schneider.
\end{abstract}
\maketitle
\section{Introduction}
Over the past few decades, there have been many interesting discoveries on the birational geometry of the Albanese morphism in both zero and positive characteristic, especially in higher dimensions. The theorem \cite[Theorem 1]{Kawamata81} of Kawamata shows that for a smooth complex projective variety $X$, if the Kodaira dimension of $X$ is $0$ then the Albanese morphism of $X$ is an algebraic fiber space. Later, Chen and Hacon (\cite{CH01,CH02}) proved that, assuming the irregularity of $X$ is equal to the dimension of $X$, then as long as one of the plurigenera $H^0(X,\mathcal{O}_X(mK_X))$ is $1$ for some $m\ge 2$, $X$ is birational to an abelian variety. This solves a conjecture of Koll\'{a}r. More recently, Hacon, Patakfalvi and L. Zhang established some positive characteristic analogs of Kawamata's result for smooth varieties and for possibly singular varieties of maximal Albanese dimension (see \cite{HP13, HP15b, HPZ17}). More specifically, this result in positive characteristic shows that, for a smooth projective variety $X$, if the Frobenius stable Kodaira dimension $\kappa_S(X)$ is $0$, then the Albanese morphism of $X$ is surjective; moreover, if $\dim X=\dim ({\rm Alb}(X))$, then $X$ is birational to an abelian variety. It is then natural to ask whether such results hold for singular varieties as well. Therefore, we formulate this question in the language of pairs as follows:
\begin{question}\label{questionlogpairsurj}
For a pair $(X,\Delta)$ (possibly with some assumption on the singularities), how do the log Kodaira dimension $\kappa(K_X+\Delta)$, the log plurigenera $h^0(X,\mathcal{O}_X(m(K_X+\Delta)))$ as well as the irregularity $q(X)$ characterize abelian varieties?
\end{question}
Another motivation originates from the following conjecture of Demailly-Peternell-Schneider:
\begin{conjecture}\cite[Conjecture 2]{DPS93}\label{conjKahlersurj}
Let $X$ be a compact K\"{a}hler manifold with $-K_X$ nef. Then the Albanese morphism of $X$ is surjective.
\end{conjecture}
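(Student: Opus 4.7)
The plan is to argue by contradiction: assume $\mathrm{alb}_X : X \to \mathrm{Alb}(X)$ is not surjective and let $Z := \mathrm{alb}_X(X) \subsetneq \mathrm{Alb}(X)$ be the image. The first step is to exploit the structure theory of subvarieties of abelian varieties (Ueno): let $B \subseteq \mathrm{Alb}(X)$ be the connected component of the stabilizer of $Z$, set $A'' := \mathrm{Alb}(X)/B$, and let $Z'' \subseteq A''$ be the image of $Z$. Then $Z''$ has finite stabilizer and, by Ueno's theorem, is of general type; by the universal property of the Albanese map, $Z''$ is positive-dimensional.

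Next, after replacing $X$ by a smooth birational model, I would consider the induced dominant morphism $f : X \to Z''$. The key input is a weak positivity statement for $f_* \omega_{X/Z''}^{\otimes m}$ (Viehweg, Koll\'ar, Kawamata), which together with the bigness of $K_{Z''}$ produces a non-trivial vertical pseudo-effective contribution to $K_X$. Pairing this against a movable curve class on $X$ whose image in $Z''$ meets the ample locus of $K_{Z''}$ yields $K_X \cdot C > 0$ for some curve $C$, contradicting the nefness of $-K_X$.

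The main obstacle is that the statement lies in the K\"ahler, not necessarily projective, category, whereas Viehweg-type weak positivity is fundamentally an algebraic tool. In the projective case this strategy is due to Q.~Zhang, whose proof the author of this paper generalizes to threefolds in positive characteristic. For the full K\"ahler conjecture one would need to replace the algebraic weak positivity input with its analytic counterpart --- positivity of direct image sheaves through singular Hermitian metrics in the spirit of P\u aun--Takayama --- and carefully verify that the curvature currents produced along the way are compatible with a semi-positively curved singular metric on $-K_X$. A complementary route is to push forward a closed positive $(1,1)$-current representing $-K_X$ to $\mathrm{Alb}(X)$ and analyze the resulting cohomology class against the rigidity of proper subvarieties of abelian varieties, but making either strategy fully rigorous in the transcendental setting remains the hard part.
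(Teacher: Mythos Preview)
The statement you are addressing is not a theorem of the paper; it is quoted verbatim as a \emph{conjecture} from \cite{DPS93} and serves purely as motivation. The paper offers no proof of it. It only records that Q.~Zhang confirmed the conjecture in the smooth \emph{projective} case, and then pursues a positive-characteristic analogue of Zhang's log-canonical generalization in dimension~3. There is therefore no ``paper's own proof'' to compare your proposal against.

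As for the content of your proposal: what you have written is not a proof of the conjecture but an accurate sketch of the projective-case strategy (Ueno's fibration plus weak positivity of pluricanonical direct images, which is essentially Zhang's argument), followed by a candid diagnosis of why this breaks down in the K\"ahler category. That diagnosis is correct: the missing ingredient is an analytic replacement for Viehweg--Koll\'ar weak positivity, and you are right that the P\u{a}un--Takayama circle of ideas on singular Hermitian metrics on direct images is the natural candidate. But since you explicitly leave that step as ``the hard part,'' what you have is a strategy outline with an acknowledged gap, not a proof---which is appropriate, given that the paper itself treats the statement as an open conjecture.
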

\noindent In \cite{Zhang96}, Q. Zhang confirms Conjecture \ref{conjKahlersurj} when $X$ is a smooth projective variety. Moreover he shows that in this case the Albanese morphism of $X$ has connected fibers. Later Q. Zhang generalized this result to the cases of log canonical pairs (\cite[Corollary 2]{Zhang05}). More specifically, he showed that if $X$ is a projective variety and $\Delta$ is an effective $\mathbb{Q}$-divisor such that the pair $(X,\Delta)$ is log canonical and $-(K_X+\Delta)$ is nef, then the Albanese morphism from any smooth model of $X$ is an algebraic fiber space.

From the characteristic $p$ perspective the above results by Q. Zhang induces the following natural question:
\begin{question}\label{Zhangcharp}
Does a statement similar to \cite[Corollary 2]{Zhang05} hold in positive characteristic?
\end{question}

In this paper, we answer a large part of Question \ref{questionlogpairsurj} in all dimensions and answer Question \ref{Zhangcharp} in dimension $3$. The first main result of this paper is as follows. It generalizes \cite[the main theorem]{CH01}, \cite[Theorem 1, Corollary 2]{CH02}, \cite[Theorem 1.1.1]{HP13} and \cite[Theorem 0.1]{HPZ17} to the cases of log pairs as follows.
\begin{thmx}[Theorem \ref{maincharp} and \ref{charactCHlog}] \label{HPgen}
Let $(X,\Delta)$ be a projective pair over an algebraically closed field $k$. Suppose that one of the following conditions holds:
\begin{enumerate}[label={\rm (\alph*)}]
\item ${\rm char}(k)=0$, $(X,\Delta)$ is klt, $h^0(X,\mathcal{O}_X(m(K_X+\Delta)))=1$ for some $m\ge 2$, and either $\dim (\mathfrak{Alb}(X))=\dim X$ or $\dim ({\rm Alb}(X))=\dim X$. \label{partCH}
\item ${\rm char}(k)=0$, $(X,\Delta)$ is log canonical, $h^0(X,\mathcal{O}_X(m(K_X+\Delta)))=h^0(X,\mathcal{O}_X(2m(K_X+\Delta)))=1$ for some $m\ge 1$, and either $\dim (\mathfrak{Alb}(X))=\dim X$ or $\dim ({\rm Alb}(X))=\dim X$. \label{partch01}
\item ${\rm char}(k)=p>0$, $\kappa_S(X+\Delta)=0$, the index of $K_X+\Delta$ is not divisible by $p$, and $\dim ({\rm Alb}(X))=\dim X$. \label{part3charp}
\end{enumerate}
Then $X$ is birational to an abelian variety.
\end{thmx}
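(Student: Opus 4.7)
The plan is to attack the three cases in parallel by reducing each to a Chen-Hacon type theorem for a smooth birational model. In all three cases the maximal Albanese dimension hypothesis passes from $X$ to any smooth resolution $Y$ with $\mathrm{Alb}(X) = \mathrm{Alb}(Y)$ (respectively for $\mathfrak{Alb}$), so it suffices to transport the rank-one pluricanonical statement across a log resolution $\pi \colon Y \to X$ of $(X,\Delta)$.

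For part (a), I would write $K_Y + \Delta_Y = \pi^*(K_X+\Delta) + E$ with $\Delta_Y$ effective with SNC support, $\lfloor \Delta_Y\rfloor = 0$ (possible because $(X,\Delta)$ is klt), and $E$ effective and $\pi$-exceptional. Then
\[
H^0\!\bigl(Y,\, \mathcal{O}_Y(m(K_Y+\Delta_Y))\bigr) \;=\; H^0\!\bigl(X,\, \mathcal{O}_X(m(K_X+\Delta))\bigr),
\]
which has dimension $1$ by hypothesis. The Albanese morphism $a \colon Y \to \mathrm{Alb}(Y)$ is generically finite by the maximal-Albanese assumption, so by the pair version of Hacon's generic vanishing the sheaf $a_*\mathcal{O}_Y(m(K_Y+\Delta_Y))$ is a GV-sheaf. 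Combined with the rank-one hypothesis, the Fourier-Mukai argument of Chen-Hacon forces this direct image to be supported on a translate of a proper abelian subvariety unless $a$ is birational; the remaining case is excluded since $\dim Y = \dim \mathrm{Alb}(Y)$. The variant with $\mathfrak{Alb}$ is handled identically on a model where the Albanese map is defined.

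For part (b) the setup is the same but $\lfloor \Delta_Y\rfloor$ may be nonzero since $(X,\Delta)$ is only log canonical. The extra hypothesis $h^0(X, \mathcal{O}_X(2m(K_X+\Delta))) = 1$ is designed to kill contributions from non-klt places: I would pass to a dlt modification, and using a canonical bundle formula together with Kollár-type injectivity argue that any log canonical center of $(X,\Delta)$ would produce a second independent section of $2m(K_X+\Delta)$. Once all such centers are ruled out, the situation reduces to a klt perturbation $(X,(1-\epsilon)\Delta)$ and part (a) applies. Part (c) then follows from the positive-characteristic analog of Chen-Hacon in \cite{HPZ17}: the Frobenius-stable Kodaira dimension $\kappa_S$ is defined via sections surviving iterated trace of Frobenius, and the hypothesis that the index of $K_X+\Delta$ is coprime to $p$ is exactly what makes the trace of Frobenius compatible with the fractional part of $\Delta$, so that $\kappa_S(X + \Delta) = 0$ descends to $\kappa_S(Y) = 0$ on a smooth resolution. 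Together with $\dim \mathrm{Alb}(Y) = \dim Y$, the theorem of Hacon-Patakfalvi-Zhang applies directly.

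The main obstacle I anticipate is part (b): converting the rank-one condition on doubled plurigenera into a geometric statement that rules out non-klt centers requires careful bookkeeping of how log canonical places contribute to sections on the log resolution, and is the genuinely new technical ingredient beyond the klt and smooth cases. A close second is verifying the coprime-index descent of $\kappa_S$ in part (c), since the mixing between the trace of Frobenius and the fractional coefficients of $\Delta$ is where potential loss of sections could invalidate the reduction to the smooth-case theorem.
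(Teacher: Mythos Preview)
Your plan for part (b) goes in the wrong direction. You propose to use the hypothesis $h^0(2m(K_X+\Delta))=1$ to rule out log canonical centers and then perturb to a klt pair so that (a) applies. This cannot work: when $K_X+\Delta\sim_{\mathbb{Q}}0$ every log plurigenus equals $1$ yet log canonical centers may certainly be present, so the hypothesis does not exclude them; the perturbation $(X,(1-\epsilon)\Delta)$ typically destroys the rank-one plurigenus condition; and part (a) requires $m\ge 2$ whereas (b) allows $m=1$. The paper never leaves the lc setting and uses the doubled hypothesis in a much simpler way, namely the Ein--Lazarsfeld trick: if $\mathcal{O}_X$ were not an isolated point of $V^0(\mathcal{O}_X(m(K_X+\Delta)))$, Shibata's theorem would give a positive-dimensional subgroup $S$ through the origin, and for varying $P\in S$ the multiplication
\[
H^0(m(K_X+\Delta)\otimes P)\otimes H^0(m(K_X+\Delta)\otimes P^{\vee})\longrightarrow H^0(2m(K_X+\Delta))
\]
would produce infinitely many distinct divisors in $|2m(K_X+\Delta)|$, contradicting $h^0(2m(K_X+\Delta))=1$. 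Isolatedness together with the GV property (valid for lc pairs by Popa--Schnell) then yields surjectivity of the Albanese via Fourier--Mukai. The conclusion in both (a) and (b) is not obtained by a direct pair-level Chen--Hacon argument as you suggest, but by a squeeze: once $a_X$ is surjective and generically finite one has $1=P_{m'}(\mathrm{Alb}(X))\le P_{m'}(X)\le h^0(m'(K_X+\Delta))=1$, so $P_{m'}(X)=1$ and the smooth Chen--Hacon theorem applies to $X$ itself.

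Your part (c) has a genuine gap. You assert that $\kappa_S(K_X+\Delta)=0$ ``descends to $\kappa_S(Y)=0$'' on a smooth resolution, but there is no such descent: the space $S^0$ does not behave monotonically under adding effective boundary, and the index hypothesis on $K_X+\Delta$ says nothing about $\kappa_S(K_Y)$. The paper never passes to a resolution here. It proves surjectivity of $a_X$ directly for the pair, by running the Fourier--Mukai/Cartier-module machinery of \cite{HP13} on $S^0a_{X,*}(\sigma(X,\Delta+D)\otimes\mathcal{O}_X(r(K_X+\Delta)))$ for a suitable auxiliary divisor $D$. Once $a_X$ is surjective, separability (from \cite{HPZ17}) gives a ramification formula $K_X=a_X^*K_{\mathrm{Alb}(X)}+R$ with $R\ge 0$, whence $0=\kappa(K_X+\Delta)\ge\kappa(K_X)\ge\kappa(K_{\mathrm{Alb}(X)})=0$. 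It is the \emph{ordinary} Kodaira dimension $\kappa(X)=0$, not $\kappa_S$, that is fed into the final black box, which is \cite{HP15b} rather than \cite{HPZ17}.
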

Note that unlike the smooth case, in the log setting the converse of Theorem \ref{HPgen} can easily fail. For example we can take $X$ to be an abelian variety and $\Delta$ a sufficiently ample divisor with small coefficient. ${\rm Alb}(X)$ and $\mathfrak{Alb}(X)$ in Theorem \ref{HPgen} means the Albanese variety constructed from the Albanese morphism and the Albanese map respectively, and they are different in general for singular varieties (see Section \ref{prelim}). Theorem \ref{HPgen} is sharp in the sense that it does not work without the condition that $(X,\Delta)$ is lc, as is illustrated by Example \ref{surfnonsurj}.

To prove part \ref{part3charp}, we use the same techniques as in \cite[Theorem 1.1.1]{HP13} and \cite[Theorem 0.1]{HPZ17}. Nevertheless, part \ref{part3charp} is interesting in the sense that to the best of the author's knowledge, there is no result of this kind in characteristic $0$. 

In the course of establishing Theorem \ref{partCH}, we also prove the following result on the structure of the Albanese map and Albanese morphism, which might be of independent interest.
\begin{thmx}[Theorem \ref{charactnonklt}] \label{Notalgfibdom}
Let $X$ be a normal complex projective variety, $(X,\Delta)$ a pair such that $\kappa(K_X+\Delta)=0$. Suppose that the Albanese map $\mathfrak{a}_X:X\dasharrow \mathfrak{Alb}(X)$ (respectively the Albanese morphism $a_X:X\to {\rm Alb}(X)$) of $X$ is not an algebraic fiber space. Then the non-lc locus of $(X,\Delta)$ dominates $\mathfrak{a}_X(X)$ (respectively $a_X(X)$).
\end{thmx}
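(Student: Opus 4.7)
My plan is to argue by the contrapositive: assuming the non-lc locus of $(X,\Delta)$ does not dominate $\mathfrak{a}_X(X)$ (respectively $a_X(X)$), I will show that the Albanese map (respectively morphism) is an algebraic fiber space. First I would take a log resolution $f\colon Y\to X$ of $(X,\Delta)$ which also resolves the indeterminacy of the Albanese map, yielding a morphism $g\colon Y\to \mathfrak{Alb}(X)=\mathrm{Alb}(Y)$. The standard decomposition reads
\[
f^{*}(K_X+\Delta)=K_Y+\Delta_Y+N,
\]
where $\Delta_Y\ge 0$ has coefficients in $[0,1]$, so that $(Y,\Delta_Y)$ is log canonical, and $N\ge 0$ is the excess divisor supported on the preimage of the non-lc locus. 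The contrapositive hypothesis then forces every component of $N$ to map via $g$ into a proper closed subset of $W:=g(Y)=\mathfrak{a}_X(X)$, so $N$ is $g$-vertical.

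Next I would argue by contradiction: suppose $g\colon Y\to \mathfrak{Alb}(X)$ is not an algebraic fiber space. A Stein-factorization / universal-property argument rules out the case of $g$ surjective with disconnected fibers, since that would force a nontrivial factorization of the Albanese morphism of the smooth variety $Y$ through a finite cover of $\mathrm{Alb}(Y)$, contradicting the universality of the Albanese. Hence we may assume $W\subsetneq \mathrm{Alb}(Y)$ and $W$ generates $\mathrm{Alb}(Y)$ as a group. By Ueno's structure theorem for subvarieties of abelian varieties, $W$ admits a fibration $W\to W'\subseteq \mathrm{Alb}(Y)/B$ with $B\subsetneq \mathrm{Alb}(Y)$ an abelian subvariety and $W'$ a positive-dimensional subvariety of general type. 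Composing with $g$ and resolving indeterminacies, I obtain a morphism $h\colon Y'\to W'$ from a smooth birational model $Y'$ of $Y$.

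I would then invoke Viehweg--Kawamata Iitaka-type subadditivity (which holds with a general-type base) to obtain, writing $\Delta_{Y'}$ and $N'$ for the birational transforms of $\Delta_Y$ and $N$,
\[
0=\kappa(K_X+\Delta)=\kappa\bigl(Y',K_{Y'}+\Delta_{Y'}+N'\bigr)\ge \kappa(W')+\kappa\bigl(F,(K_{Y'}+\Delta_{Y'}+N')|_F\bigr),
\]
where $F$ is a general fiber of $h$. Since $W'$ is positive-dimensional and of general type, $\kappa(W')\ge 1$. On the other hand, the pullback to $Y'$ of the unique effective divisor in $|m(K_X+\Delta)|$ restricts on a general $F$ to a nonzero effective divisor representing the restricted class, so the fiber term is $\ge 0$. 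Combining gives $0\ge 1$, a contradiction. The Albanese morphism version runs in parallel, using that $\mathrm{Alb}(X)$ is a quotient of $\mathfrak{Alb}(X)$ and transferring the hypothesis through this quotient.

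The main obstacle I anticipate lies in the clean formulation of the subadditivity step for the pair $(Y',\Delta_{Y'}+N')$, which fails to be log canonical along $N'$: one must either invoke a version of Viehweg--Kawamata that accepts any effective divisor with Kodaira dimension $\ge 0$ (rather than requiring the lc property), or first upgrade the hypothesis to $\kappa(K_{Y'}+\Delta_{Y'})\ge 0$ by exploiting the $g$-verticality of $N$ so that the pullback of the effective section of $m(K_X+\Delta)$ descends through the components of $N'$, and then apply the standard lc subadditivity to $(Y',\Delta_{Y'})$ directly.
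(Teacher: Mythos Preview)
Your overall architecture mirrors the paper's: pass to a log resolution, separate the boundary into an lc part and an ``excess'' part supported on the non-lc locus, use the contrapositive to make the excess vertical over the Albanese image, invoke a structure theorem for subvarieties of abelian varieties to produce a general-type base, and derive a contradiction via subadditivity. The paper uses Kawamata's version (an \'etale cover $Z'\cong B\times W$ of the Stein-factorization target $Z$, with $W$ of general type and maximal Albanese dimension) rather than Ueno's fibration, but this is a minor structural difference.

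The genuine gap is exactly the one you flag at the end, and neither of your proposed fixes closes it. For fix (a), no subadditivity theorem of Viehweg--Kawamata--Campana type applies to the pair $(Y',\Delta_{Y'}+N')$ once coefficients exceed $1$; you must work with the lc pair $(Y',\Delta_{Y'})$. For fix (b), the verticality of $N'$ over the full Albanese image $W$ does \emph{not} immediately yield $\kappa(K_{Y'}+\Delta_{Y'})\ge 0$, nor does it give nonnegativity on the general fiber $F$ of $h\colon Y'\to W'$: the restriction $N'|_F$ need not vanish, since $F$ still fibers over (a translate of) the abelian factor $B$, and $N'|_F$ is only vertical for $F\to B$, not zero. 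So you are left needing to show that an lc pair with nonnegative Kodaira dimension on the general fiber of a map to an abelian variety has nonnegative Kodaira dimension globally. This is precisely the content of the paper's Lemma~\ref{GVandKodairadim}, and it is not elementary: its proof combines Popa--Schnell's generic vanishing for pushforwards of log pluricanonical sheaves (Theorem~\ref{ps14}), the inclusion $V^{i+1}\subseteq V^i$ for GV-sheaves, and Shibata's result that the $V^0$ loci are torsion translates of abelian subvarieties. Without this ingredient (or an equivalent), your argument does not close, because you cannot certify the fiberwise term in Campana's inequality is $\ge 0$ for the lc pair to which the inequality actually applies.

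A secondary point: your dismissal of the ``surjective with disconnected fibers'' case via universality is too quick. The Stein factorization target $Z$ is a priori singular, so identifying its Albanese with $\mathrm{Alb}(Y)$ and running the universality argument requires care (this is the content of the paper's Lemma~\ref{Albeq}). The paper avoids a separate case analysis altogether: once $W$ is shown to be a point, the \'etale cover $B\to Z$ forces $Z$ to be birational to an abelian variety, and Lemma~\ref{Albeq} finishes both cases at once.
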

Theorem \ref{Notalgfibdom} generalizes \cite[Theorem 1]{Kawamata81} beyond the log canonical case. The proof of it has various kinds of ingredients, including analyzing in detail the structure of the Albanese map and the Albanese morphism, using Campana's work on the Iitaka conjecture (\cite{Campana04}) over a variety of general type, and applying a generic vanishing result developed by Popa and Schnell (\cite{PS14}) to compare certain Kodaira dimensions. 


We next present our positive characteristic analog of \cite[Corollary 2]{Zhang05} in dimension $3$ as follows. The proof of this result is inspired by some ideas in the previous paper \cite{Wang15b} of the author.
\begin{thmx}[Theorem \ref{surj3d}] \label{Zhangcharpanal}
Let $X$ be a normal projective threefold over an algebraically closed field $k$ of characteristic $p>0$, and $(X,\Delta)$ a pair such that the coefficients of the components of $\Delta$ are $\le 1$ and $-(K_X+\Delta)$ is semiample.  Suppose that 
\begin{enumerate}[label={\rm (\arabic*)}]
\item $\displaystyle p>\max\{\dfrac{2}{\delta}, i_{\rm bpf}(-(K_X+\Delta))\}$, where $\delta$ is the minimal non-zero coefficient of $\Delta$. \label{charbigcond}
\item There is a fiber $X_0$ of the Albanese morphism $a_X$ of $X$ such that $(X_0,\Delta|_{X_0})$ is normal and sharply $F$-pure. \label{sFp}
\end{enumerate}
Then $a_X$ is surjective.
\end{thmx}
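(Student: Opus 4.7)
The strategy is by contradiction. Suppose $a_X$ is not surjective, set $A := \mathrm{Alb}(X)$ and $Y := a_X(X) \subsetneq A$. I first reduce to the case where the image is a proper subvariety of general type. Let $B \subset A$ be the identity component of the stabilizer of $Y$ and let $\pi : A \to \bar{A} := A/B$. The composition $X \to A \to \bar{A}$ factors through $\bar{Y} := \pi(Y)$, which has trivial stabilizer and still satisfies $\bar{Y} \subsetneq \bar{A}$; by Ueno's theorem on subvarieties of abelian varieties (valid over any algebraically closed field), $\bar{Y}$ is of general type. Since surjectivity of $a_X$ onto $A$ is equivalent to surjectivity of the induced map $X \to \bar{A}$ for every such quotient, we may replace $A$ by $\bar{A}$ and assume $Y$ itself is of general type with $Y \subsetneq A$.

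Next I would take the Stein factorization $a_X : X \xrightarrow{f} Y' \to Y$. Then $f$ has connected fibers onto the normal variety $Y'$, which is finite over $Y$ and hence itself of general type, so $K_{Y'}$ is big. The fiber $X_0$ from hypothesis (2) maps to a point of $Y$ and is (up to the finite base change $Y' \to Y$) a fiber of $f$. The key technical step is to establish a positive-characteristic canonical bundle formula
\[
K_X + \Delta \sim_{\mathbb{Q}} f^*(K_{Y'} + B_{Y'} + M_{Y'}),
\]
with $B_{Y'} \ge 0$ effective and $M_{Y'}$ pseudo-effective on $Y'$. Sharp $F$-purity of $(X_0, \Delta|_{X_0})$ provides a compatible Frobenius splitting on that fiber; the bound $p > i_{\mathrm{bpf}}(-(K_X+\Delta))$ ensures that the index of the semi-ample anticanonical divisor is coprime to $p$, making the relevant Frobenius pushforwards descend in a controlled way; and $p > 2/\delta$ gives the numerical control on coefficients of $\Delta$ needed to force $B_{Y'}$ to be effective. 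Together these are the positive-characteristic substitute for Hodge-theoretic semipositivity, paralleling the approach of \cite{Wang15b}.

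With the formula in hand, the contradiction is immediate: semi-ampleness of $-(K_X+\Delta)$ forces $-(K_{Y'} + B_{Y'} + M_{Y'})$ to be pseudo-effective on $Y'$ (for instance by pushing forward sections of large multiples along $f$), but $K_{Y'} + B_{Y'} + M_{Y'}$ is the sum of a big divisor with an effective divisor and a pseudo-effective divisor, hence is itself big---a contradiction.

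The hardest step will be the canonical bundle formula in positive characteristic: without Kawamata--Viehweg vanishing or variation-of-Hodge-structure input, positivity of the moduli part has to be coaxed out of Frobenius splittings on the special fiber $X_0$ and propagated across $Y'$. The threefold restriction keeps the fibers of $f$ at dimension at most two, the range where sharp $F$-purity and its compatible splittings remain well-behaved. A secondary but nontrivial point is to verify that the Step 1 reduction is compatible with hypothesis (2)---in particular, that a compatibly $F$-pure fiber continues to exist after quotienting $A$ by a subtorus, so that the Frobenius input survives the reduction to $Y$ of general type.
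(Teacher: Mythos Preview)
Your overall architecture (Ueno reduction to a base of general type, then a canonical bundle formula yielding a contradiction with bigness of $K_{Y'}$) is conceptually clean, but it rests on a step that is not available in the generality you need. The ``key technical step''---a canonical bundle formula $K_X+\Delta\sim_{\mathbb{Q}} f^*(K_{Y'}+B_{Y'}+M_{Y'})$ with $B_{Y'}\ge 0$ and $M_{Y'}$ pseudo-effective---is precisely what one does \emph{not} have in characteristic $p$ for arbitrary relative dimension. The only canonical bundle formula the paper has at its disposal is Theorem~\ref{canonbund}, which applies only to threefold-to-surface maps whose general fibers are $\mathbb{P}^1$; there is no analogue when the fibers are surfaces (i.e.\ when $\dim Y'=1$) or higher-genus curves. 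Your sketch of how sharp $F$-purity of a single fiber plus the index bounds should ``coax out'' semipositivity of the moduli part is aspirational rather than a proof, and this is exactly the obstacle that forces the paper into a different strategy.

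The paper avoids a uniform canonical bundle formula by doing a dimension-by-dimension case analysis on the Stein factorization $X\xrightarrow{p}Y\xrightarrow{q}a_X(X)$, invoking a \emph{different} positivity input in each case: for $\dim Y=1$ it perturbs $\Delta$ using Lemma~\ref{Fpureperturb} and applies Patakfalvi's semipositivity theorem \cite[Theorem 3.10]{Patakfalvi14} to conclude $-K_Y$ is nef; for $\dim Y=2$ with $\mathbb{P}^1$-fibers it uses the specialized Theorem~\ref{canonbund}; for $\dim Y=2$ with higher-genus fibers it appeals to \cite[3.2]{CZ15}; and for $\dim Y=3$ it uses separability plus \cite[Theorem 0.2]{HP15b}. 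The hypotheses $p>2/\delta$ and $p>i_{\rm bpf}$ are used in specific places (the first in Theorem~\ref{canonbund}, the second to choose a prime $D\sim -m(K_X+\Delta)$ with $p>m$ so that the $F$-pure perturbation works), not as a blanket input to a single formula.

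A secondary gap you yourself flag is real: after quotienting $A$ by the stabilizer $B$, the fibers of $X\to\bar{Y}$ are preimages of translates of $B\cap Y$, not fibers of $a_X$, so there is no reason a fiber of the new map inherits normality or sharp $F$-purity from $X_0$. The paper sidesteps this entirely by never making an Ueno reduction; it works directly with the Stein factorization of $a_X$ and reduces surjectivity of $a_X$ to surjectivity of the Albanese morphism of $Y$ via Lemma~\ref{Albeq}.
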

Here $i_{\rm bpf}(-(K_X+\Delta))$ is the \emph{base-point-free index} of $-(K_X+\Delta)$, which is the smallest integer $m>0$ such that $-m(K_X+\Delta)$ is base point free (cf. Definition \ref{bpfindex}). In particular if we assume that $\Delta=0$ and $|-K_X|$ is base point free, then condition \ref{charbigcond} is automatically satisfied, and in that case, Theorem \ref{Zhangcharpanal} works in small characteristics as well.

It is worth noting that Theorem \ref{HPgen}, \ref{Notalgfibdom} as well as \cite[Corollary 2]{Zhang05} does not hold in general if $(X,\Delta)$ is not log canonical, even in dimension $2$, as is shown in Example \ref{surfnonsurj}. The example is a projective cone $S$ over a non-hyperelliptic curve of genus $\ge 2$ such that $K_S\sim_{\rm lin}0$, so obviously its Albanese map of $S$ is not surjective (but the Albanese morphism of $S$ is). Observe that in this case the vertex of $S$ is its non-lc center, and the exceptional divisor over the vertex dominates the image of the Albanese map of $S$, which is the image of the curve in its Jacobian. 
\subsection*{Acknowledgements} 
This project started from a question the author put forward in Christopher Hacon's class, and a large part of it was done when the author was at the University of Utah. The author would like to thank Christopher for his constant support, encouragement and many enlightening discussions. He is also indebted to Karl Schwede who provided lots of help on $F$-singularities, and Sofia Tirabassi who pointed out the very important reference \cite{Shibata15}. Moreover he thanks Omprokash Das, Tommaso de Fernex, Yoshinori Gongyo, Zsolt Patakfalvi, Mihnea Popa and Shunsuke Takagi for helpful conversations. Part of this project was done when the author was attending the conference  ``Higher Dimensional Algebraic Geometry and Characteristic $p$" at the Centre International de Rencontres Math\'{e}matiques and was visiting the University of Oslo, and he wishes to thank the organizers, especially John Christian Ottem, for their support and hospitality. 
\section{Notations and preliminaries}\label{prelim}
Throughout the article, we work over an algebraically closed field $k$. As we will consider the cases in both characteristic $0$ and characteristic $p>0$, we do not make any restriction on the characteristic of $k$ in advance.

We set up the following conventions. 
\begin{enumerate}
\item For definitions related to singularities in birational geometry (e.g. \emph{pairs}, \emph{discrepancy}, \emph{klt}, \emph{lc}) we follow \cite[Part I, Chapter 3]{HK10}. 
\item For a morphism $f:X\to Y$ of varieties we use both $F_{X/Y}$ and $F_f$ to denote a general fiber of $f$, and we say that $f$ is an \emph{algebraic fiber space} if it is surjective and has connected fibers. 
\end{enumerate}
\subsection{Albanese variety, Albanese map, Albanese morphism, derived categories, Fourier-Mukai transform and generic vanishing}
We first present the definitions for the Albanese map and the Albanese morphism. These can be found in \cite{Lang83} and \cite{FGIKNV05}.
\begin{theorem-definition}
Let $X$ be a variety over $k$. There exists an abelian variety $\mathfrak{Alb}(X)$ together with a rational map $\mathfrak{a}_X:X\dasharrow\mathfrak{Alb}(X)$ such that
\begin{itemize}
\item $\mathfrak{a}_X(X)$ generates $\mathfrak{Alb}(X)$, i.e. $\mathfrak{a}_X(X)$ is not contained in a translate of any proper abelian subvariety of $\mathfrak{Alb}(X)$.
\item For every rational map $f: X\dasharrow A$ from $X$ to an abelian variety $A$, there exists a homomorphism $g:\mathfrak{Alb}(X)\to A$ and a constant $a\in A$ such that $f=g\circ \mathfrak{a}_X+a$.
\end{itemize}
We call $\mathfrak{a}_X$ the \emph{Albanese map} of $X$ and $\mathfrak{Alb}(X)$ the \emph{Albanese variety via the Albanese map} of $X$.
\end{theorem-definition}
\begin{theorem-definition}
Let $X$ be a normal projective variety over $k$. There exists an abelian variety ${\rm Alb}(X)$ together with a morphism $a_X:X\to{\rm Alb}(X)$ such that
\begin{itemize}
\item $a_X(X)$ generates ${\rm Alb}(X)$, i.e. $a_X(X)$ is not contained in a translate of any proper abelian subvariety of ${\rm Alb}(X)$.
\item For every morphism $f: X\to A$ from $X$ to an abelian variety $A$, there exists a homomorphism $h:{\rm Alb}(X)\to A$ and a constant $b\in A$ such that $f=h\circ a_X+b$.
\end{itemize}
We call $a_X$ the \emph{Albanese morphism} of $X$ and ${\rm Alb}(X)$ the \emph{Albanese variety via the Albanese morphism} of $X$.
\end{theorem-definition}

Throughout the article, unless otherwise stated, for a normal projective variety $X$ we use $\mathfrak{a}_X:X\dasharrow\mathfrak{Alb}(X)$ ($a_X:X\to{\rm Alb}(X)$) to denote the Albanese map (Albanese moprhism) of $X$. We refer to \cite[Chapter II, \S 3]{Lang83} and \cite[Chapter 9]{FGIKNV05} for more details about the Albanese map and the Albanese morphism respectively. The Albanese map and the Albanese morphism agree in characteristic $0$ for normal proper varieties with rational singularities (see \cite[Proposition 2.3]{Reid83} or \cite[Lemma 8.1]{Kawamata85}). But they do not agree in general, as is illustrated by the following
\begin{example}\label{mamodiff}
Let $X$ be a projective cone over any curve $C$ of genus $\ge 1$. Since $X$ is covered by rational curves passing through the vertex and the Albanese morphism contracts all the rational curves, we see that the Albanese morphism of $X$ has to be a morphism from $X$ to a point. On the other hand, let $X'$ be the blow-up of $X$ at the vertex, $\nu:X\dasharrow X'$ the natural birational map, $p:X'\to C$ the natural $\mathbb{P}^1$ fibration from $X'$ to $C$ and $j:C\hookrightarrow {\rm Jac(C)}$ the embedding from $C$ to its Jacobian. Then the Albanese map of $X$ is $j\circ p\circ \nu$, whereas the Albanese morphism (and also the Albanese map) of $X'$ is $j\circ p$.
\end{example}
\begin{remark} \label{birsurAlb}
Example \ref{mamodiff} is also an example of the fact that the Albanese variety via the Albanese map is a birational invariant, but the Albanese variety via the Albanese morphism is not. Still, by definition, for a normal projective variety $X$, there exists a surjective homomorphism $\alpha$ from $\mathfrak{Alb}(X)$ to ${\rm Alb}(X)$ such that $a_X=\alpha\circ\mathfrak{a}_X$.
\end{remark}

For convenience, in characteristic $0$ we say that \emph{the Albanese map of $X$ is an algebraic fiber space} if the Albanese morphism of any smooth model of $X$ is an algebraic fiber space.
\begin{definition}\label{cohomsupploci}
For an abelian variety $A$ and a coherent sheaf $\mathcal{F}$ on $A$, we define 
$V^i(\mathcal{F})$, the \emph{$i$-th cohomology support locus}, as $$V^i(\mathcal{F})=\{P\in {\rm Pic}^0(A)|h^i(A,\mathcal{F}\otimes P)> 0\}.$$ $\mathcal{F}$ is called a \emph{GV-sheaf}, if codim$_{{\rm Pic}^0(A)}V^i(\mathcal{F})\ge i$ for all $i\ge 0$. 
\end{definition}
\begin{theorem}\cite[Corollary 3.2 (1)]{Hacon04} \label{Hac04Vi}
Let $\mathcal{F}$ be a GV-sheaf on an abelian variety $A$. For any $P\in {\rm Pic}^0(A)$, if $i\ge 0$ and $H^i(A,\mathcal{F}\otimes P)=0$, then $H^{i+1}(A,\mathcal{F}\otimes P)=0$.
\end{theorem}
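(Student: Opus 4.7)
The plan is to exploit the Fourier-Mukai transform together with the derived-category characterization of GV-sheaves to recast the problem as a question about a single coherent sheaf on the smooth variety $\hat{A} := \mathrm{Pic}^0(A)$, where depth-based techniques apply.

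Concretely, let $g = \dim A$ and let $\mathbf{R}\widehat{S}: D^b(A) \to D^b(\hat{A})$ denote the Fourier-Mukai functor with kernel the normalized Poincar\'e bundle $\mathcal{P}$. Writing $\mathbf{R}\Delta_X(-) := \mathbf{R}\mathcal{H}om_X(-, \mathcal{O}_X)$ (and using that $\omega_A \cong \mathcal{O}_A$), the central input is the exchange-of-duals identity
\[
\mathbf{R}\widehat{S}(\mathbf{R}\Delta_A \mathcal{F}) \;\cong\; (-1_{\hat{A}})^*\,\mathbf{R}\Delta_{\hat{A}}\bigl(\mathbf{R}\widehat{S}(\mathcal{F})\bigr)[g],
\]
a consequence of Grothendieck--Serre duality together with Mukai's inversion theorem. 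Combined with cohomology and base change applied to $p_{\hat{A}}: A \times \hat{A} \to \hat{A}$, this identity lets one identify $V^i(\mathcal{F})$ (up to the involution $P \mapsto P^{-1}$) with the support of the $i$-th cohomology sheaf of $\mathbf{R}\Delta_{\hat{A}}\bigl(\mathbf{R}\widehat{S}(\mathbf{R}\Delta_A \mathcal{F})\bigr)$ on $\hat{A}$. Using the standard bound $\mathrm{codim}\,\mathrm{supp}\,\mathcal{E}xt^i(-, \mathcal{O}_{\hat{A}}) \geq i$ on the smooth variety $\hat{A}$, one checks that the GV condition is \emph{equivalent} to $\mathbf{R}\widehat{S}(\mathbf{R}\Delta_A \mathcal{F})[g]$ being a single coherent sheaf $\widehat{\mathcal{F}}$ concentrated in degree $0$.

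With this in hand, the proof concludes as follows. Applying $\mathbf{R}\Delta_{\hat{A}}$ to the exchange identity and invoking biduality, one obtains $\mathbf{R}\widehat{S}(\mathcal{F}) \cong (-1_{\hat{A}})^*\,\mathbf{R}\Delta_{\hat{A}}(\widehat{\mathcal{F}})$. The hypothesis $H^i(A, \mathcal{F} \otimes P) = 0$ then translates via base change into the stalk vanishing $\mathcal{E}xt^i_{\hat{A}}(\widehat{\mathcal{F}}, \mathcal{O}_{\hat{A}})_{[P^{-1}]} = 0$, and the goal becomes deducing $\mathcal{E}xt^{i+1}_{\hat{A}}(\widehat{\mathcal{F}}, \mathcal{O}_{\hat{A}})_{[P^{-1}]} = 0$. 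This last implication is the crux and fails for general coherent sheaves on smooth varieties, but for our $\widehat{\mathcal{F}}$ it is forced by the exchange identity, which, together with Mukai's equivalence, constrains the projective dimension of $\widehat{\mathcal{F}}$ and makes the local-to-global Ext spectral sequence collapse at $[P^{-1}]$ in the desired way --- concretely via a depth / Auslander--Buchsbaum computation on the regular local ring $\mathcal{O}_{\hat{A},[P^{-1}]}$.

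The main obstacle I anticipate is exactly this final propagation step: carrying the global derived-category information encoded in the exchange identity down to a genuinely pointwise vanishing statement about consecutive $\mathcal{E}xt$-sheaves, rather than merely the generic codimension estimate built into the GV definition.
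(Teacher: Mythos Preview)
The paper does not supply a proof of this statement; it is quoted verbatim from \cite[Corollary 3.2 (1)]{Hacon04} and used as a black box. So there is nothing in the paper to compare against, but your outline itself has a genuine gap worth naming.

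Your reduction is the right one: the GV hypothesis is equivalent to $\mathbf{R}\widehat S(\mathbf{R}\Delta_A\mathcal F)$ being a single sheaf $\widehat{\mathcal F}$ (up to shift), and then $\mathbf{R}\widehat S(\mathcal F)\cong(-1_{\hat A})^*\mathbf{R}\Delta_{\hat A}(\widehat{\mathcal F})$. The mistake is in the base-change step. Derived base change along $\{[P]\}\hookrightarrow\hat A$ yields
\[
\mathbf{R}\widehat S(\mathcal F)\otimes^{\mathbf L}_{\mathcal O_{\hat A}}k([P])\;\cong\;\mathbf{R}\Gamma(A,\mathcal F\otimes P),
\]
so $H^i(A,\mathcal F\otimes P)$ is the $i$-th cohomology of the \emph{derived} fibre of $\mathbf{R}\widehat S(\mathcal F)$, not the stalk (or ordinary fibre) of $R^i\widehat S(\mathcal F)\cong\mathcal{E}xt^i(\widehat{\mathcal F},\mathcal O_{\hat A})$. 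These two differ precisely by the Tor spectral sequence you are implicitly trying to collapse, and your claimed identification with the $\mathcal{E}xt$-stalk is not valid.

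If you carry the computation through correctly --- set $R=\mathcal O_{\hat A,[P^{-1}]}$, $M=\widehat{\mathcal F}_{[P^{-1}]}$, $k=R/\mathfrak m$, and take a free resolution $F_\bullet\to M$ --- you find
\[
H^i(A,\mathcal F\otimes P)\;\cong\;\mathcal H^i\bigl(\mathbf{R}\mathrm{Hom}_R(M,R)\otimes^{\mathbf L}_R k\bigr)
\;=\;\mathcal H^i\bigl(F_\bullet^{\vee}\otimes_R k\bigr)\;\cong\;\mathrm{Tor}^R_i(M,k)^{\vee},
\]
since $(F_\bullet\otimes k)^{\vee}=F_\bullet^{\vee}\otimes k$ for complexes of finite free modules. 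Now the propagation is elementary: over any Noetherian local ring, $\mathrm{Tor}^R_i(M,k)=0$ forces $\mathrm{Tor}^R_j(M,k)=0$ for all $j\ge i$, because in a \emph{minimal} free resolution the $i$-th term vanishes by Nakayama and then all higher terms vanish automatically. No further appeal to the exchange identity, depth, or Auslander--Buchsbaum is needed; the ``obstacle'' you anticipated is entirely an artefact of having translated to $\mathcal{E}xt$-stalks rather than to $\mathrm{Tor}$.
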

\begin{theorem}\cite[Variant 5.5]{PS14}\label{ps14}
Let $f: X\to A$ be a morphism from a normal complex projective variety to an abelian variety. If $(X,\Delta)$ is an lc pair and $k>0$ is any integer such that $k(K_X+\Delta)$ is Cartier, then $f_*\mathcal{O}_X(k(K_X+\Delta))$ is a GV-sheaf. 
\end{theorem}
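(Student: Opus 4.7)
The plan is to reduce the claim to the smooth case of \cite{PS14}, which asserts that for any morphism $g:\tilde Y\to A$ from a smooth complex projective variety to an abelian variety and every integer $k\ge 1$, the sheaf $g_*\omega_{\tilde Y}^{\otimes k}$ is a GV-sheaf on $A$. As a first step I would take a log resolution $\pi:Y\to X$ of $(X,\Delta)$ and write
\[
K_Y+\Delta_Y=\pi^*(K_X+\Delta)+E,
\]
with $\Delta_Y$ of simple normal crossing support and coefficients in $[0,1]$ (this is exactly the lc hypothesis) and $E$ an effective $\pi$-exceptional $\mathbb{Q}$-divisor. The projection formula combined with a Koll\'ar-type higher direct image vanishing for log smooth pairs yields
\[
\pi_*\mathcal{O}_Y\bigl(\lceil k(K_Y+\Delta_Y)\rceil\bigr)=\mathcal{O}_X(k(K_X+\Delta)),\qquad R^i\pi_*\mathcal{O}_Y\bigl(\lceil k(K_Y+\Delta_Y)\rceil\bigr)=0\text{ for }i>0,
\]
so by the Leray spectral sequence $f_*\mathcal{O}_X(k(K_X+\Delta))\cong(f\circ\pi)_*\mathcal{O}_Y\bigl(\lceil k(K_Y+\Delta_Y)\rceil\bigr)$, and the problem reduces to the log smooth pair $(Y,\Delta_Y)$.

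To strip off the boundary $\Delta_Y$, I would pass to a Kawamata-type cyclic cover. Choose $N\ge 1$ with $N\Delta_Y$ Cartier and form the $N$-th root cover $\rho:\tilde Y\to Y$ along (the components of) $N\Delta_Y$; after resolving, $\tilde Y$ is smooth projective. The usual eigensheaf decomposition of $\rho_*\omega_{\tilde Y}^{\otimes k}$ under the cyclic $\mathbb{Z}/N$-action, together with the fact that the coefficients of $\Delta_Y$ lie in $[0,1]$, exhibits $\mathcal{O}_Y(\lceil k(K_Y+\Delta_Y)\rceil)$ as a direct summand of $\rho_*\omega_{\tilde Y}^{\otimes k}$ (up to twist by a line bundle absorbed into the cover structure). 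Consequently $(f\circ\pi)_*\mathcal{O}_Y(\lceil k(K_Y+\Delta_Y)\rceil)$ is a direct summand of $(f\circ\pi\circ\rho)_*\omega_{\tilde Y}^{\otimes k}$.

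Applying \cite{PS14} to the morphism $g:=f\circ\pi\circ\rho:\tilde Y\to A$ now shows that $g_*\omega_{\tilde Y}^{\otimes k}$ is a GV-sheaf on $A$. Since the cohomology support loci of a direct summand are contained in those of the ambient sheaf (by Theorem \ref{Hac04Vi} together with the decomposition of cohomology on summands), a direct summand of a GV-sheaf is again a GV-sheaf, and we conclude that $f_*\mathcal{O}_X(k(K_X+\Delta))$ is GV, as desired.

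The main obstacle is the cyclic-cover step: one must verify carefully that the Kawamata construction really produces $\mathcal{O}_Y(\lceil k(K_Y+\Delta_Y)\rceil)$ as a direct summand of $\rho_*\omega_{\tilde Y}^{\otimes k}$ with the precise rounding dictated by the lc hypothesis, and that the auxiliary resolution of singularities of $\tilde Y$ does not spoil the decomposition. A conceptually cleaner but more technology-heavy alternative is to invoke the mixed-Hodge-module formalism of \cite{PS14} directly: the lc pair $(X,\Delta)$ determines a polarizable Hodge module on $X$, and the GV property of its pushforward to $A$ follows from the Hodge-module version of the main result there, bypassing the cyclic-cover step entirely.
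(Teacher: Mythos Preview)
The paper does not prove this statement; it is quoted verbatim from \cite[Variant~5.5]{PS14} and used as a black box. So there is no ``paper's own proof'' to compare against, and your sketch is really an attempt to reprove the Popa--Schnell result.

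That said, your outline has two genuine gaps. First, the claim $R^i\pi_*\mathcal{O}_Y(\lceil k(K_Y+\Delta_Y)\rceil)=0$ for $i>0$ is both unnecessary and unjustified: you only need $R^0\pi_*$ to identify $(f\circ\pi)_*\mathcal{O}_Y(\lceil k(K_Y+\Delta_Y)\rceil)$ with $f_*\mathcal{O}_X(k(K_X+\Delta))$, since $(f\circ\pi)_*=f_*\circ\pi_*$ on the level of ordinary pushforwards. Higher direct image vanishing of this type is delicate for lc (as opposed to klt) pairs and is not available in the form you state. Second, and more seriously, the cyclic-cover step does not work for $k\ge 2$ in the way you describe. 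For $k=1$ the eigensheaf decomposition of $\rho_*\omega_{\tilde Y}$ does produce $\mathcal{O}_Y(K_Y+\lceil\Delta_Y\rceil)$ as a summand, but for $k\ge 2$ the eigensheaves of $\rho_*\omega_{\tilde Y}^{\otimes k}$ are of the form $\omega_Y^{\otimes k}\otimes L^j\otimes\mathcal{O}_Y(-\lfloor\text{something}\rfloor)$, and there is no summand that is literally $\mathcal{O}_Y(kK_Y+\lceil k\Delta_Y\rceil)$. The rounding behaves badly under multiplication by $k$, and this is precisely why Popa--Schnell need the full Hodge-module machinery rather than classical covering tricks. You correctly flag this as ``the main obstacle,'' but it is not a detail to be checked: it is where the naive reduction breaks down.

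Your final paragraph points in the right direction: the actual argument in \cite{PS14} associates to the lc pair a filtered $\mathcal{D}$-module underlying a mixed Hodge module and applies Saito's decomposition and vanishing theorems; the GV property then follows from their general criterion for pushforwards of Hodge-module-theoretic objects to abelian varieties. If you want a self-contained proof, that is the route to take; the cyclic-cover reduction cannot be salvaged for arbitrary $k$.
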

For a normal projective variety of dimension $n$, we denote by $D(X)$ the derived category of $\mathcal{O}_X$ modules, and denote by $D_{\rm qc}(X)$ the full subcategory of $D(X)$ consisting of bounded complexes with quasi-coherent cohomologies. We denote the dualizing complex of $X$ by $\omega_X^{\cdot}$ and define the dualizing functor $D_X$ by $D_X(F):=R\mathcal{H}om(F,\omega_X^{\cdot}[n])$ for any $F\in D_{\rm qc}(X)$.
\begin{definition}\cite{Mukai81}
Let $A$ be an abelian variety and $\hat{A}$ its dual abelian variety. Let $p_A: A\times \hat{A}\to A$ and $p_{\hat{A}}:A\times \hat{A}\to \hat{A}$ be the natural projection morphisms and $\mathcal{P}$ the Poincar\'{e} line bundle on $A\times \hat{A}$. We define the \emph{Fourier-Mukai transform} $R\hat{S}:D_{\rm qc}(A)\to D_{\rm qc}(\hat{A})$ and $RS:D_{\rm qc}(\hat{A})\to D_{\rm qc}(A)$ with respect to the kernel $\mathcal{P}$ by
$$R\hat{S}(\cdot)=Rp_{\hat{A},*}(p_A^*(\cdot)\otimes\mathcal{P}),\ \ \ RS(\cdot)=Rp_{A,*}(p_{\hat{A}}^*(\cdot)\otimes\mathcal{P}).$$
\end{definition}
For an abelian variety $A$ of dimension $g$, $R^g\hat{S}$ gives an equivalence in categories between unipotent vector bundles on $A$ and the category of Artinian $\mathcal{O}_{\hat{A},\hat{0}}$-modules of finite length on $\hat{A}$ (see \cite[Example 2.9]{Mukai81} and \cite[Section 1.2]{CH09}).
\begin{lemma} \label{GVandKodairadim}
Suppose that ${\rm char}(k)=0$. Let $(X,\Delta)$ be a log canonical pair and $f:X\to A$ an algebraic fiber space from $X$ to an abelian variety $A$. If $\kappa(K_{F_{f}}+\Delta|_{F_{f}})\ge 0$, then $\kappa(K_X+\Delta)\ge 0$.
\end{lemma}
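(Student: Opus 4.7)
The plan is to produce, for some $m'>0$, a nonzero global section of $m'(K_X+\Delta)$ on $X$ using generic vanishing on the Albanese-type target $A$. First, using $\kappa(K_{F_f}+\Delta|_{F_f})\ge 0$, I would choose $m>0$ divisible enough so that $m(K_X+\Delta)$ is Cartier and $h^0(F_f,m(K_{F_f}+\Delta|_{F_f}))>0$. By adjunction on a smooth general fiber (giving $(K_X+\Delta)|_{F_f}=K_{F_f}+\Delta|_{F_f}$) and generic flatness of $f$, the sheaf $\mathcal{F}:=f_*\mathcal{O}_X(m(K_X+\Delta))$ has nonzero generic stalk, and hence is a nonzero coherent sheaf on $A$. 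By Theorem \ref{ps14}, $\mathcal{F}$ is a GV-sheaf.

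Second, I would show that $V^0(\mathcal{F})$ is nonempty and contains a torsion point. For nonemptiness: if $V^0(\mathcal{F})=\emptyset$, then Theorem \ref{Hac04Vi} applied inductively gives $V^i(\mathcal{F})=\emptyset$ for every $i\ge 0$, hence $H^i(A,\mathcal{F}\otimes P)=0$ for all $i$ and all $P\in{\rm Pic}^0(A)$. This forces $R\hat{S}(\mathcal{F})=0$ in $D_{\rm qc}(\hat{A})$, and the Fourier-Mukai equivalence of derived categories then forces $\mathcal{F}=0$, contradicting the previous step. In characteristic zero, the Green-Lazarsfeld-Simpson structure theorem (applicable via the setup of Theorem \ref{ps14}) asserts that $V^0(\mathcal{F})$ is a finite union of torsion translates of abelian subvarieties of $\hat{A}$, so it contains a torsion point $P$ of some order $N\ge 1$.

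Third, since $P\in V^0(\mathcal{F})$, there is a nonzero section $s\in H^0(X,m(K_X+\Delta)\otimes f^*P)$. Taking its $N$-th tensor power gives
\[
s^{\otimes N}\in H^0\!\bigl(X,\,Nm(K_X+\Delta)\otimes f^*P^{\otimes N}\bigr)=H^0\!\bigl(X,\,Nm(K_X+\Delta)\bigr),
\]
which is nonzero since the tensor product of nonzero sections of line bundles is nonzero. Therefore $\kappa(K_X+\Delta)\ge 0$.

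The main obstacle is extracting a \emph{torsion} point from $V^0(\mathcal{F})$: the basic GV theory only delivers nonemptiness of $V^0$, while isolating a torsion point relies on the Green-Lazarsfeld-Simpson structure theorem, and this is precisely where the characteristic-zero hypothesis becomes essential. If this torsion refinement were unavailable one could still rescue the argument by pulling back along an étale cover induced by $[N]\colon A\to A$ and descending via a Galois norm, but the $N$-th power trick above is cleaner when the torsion refinement applies.
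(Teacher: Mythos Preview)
Your proposal follows essentially the same route as the paper's proof: show $\mathcal{F}=f_*\mathcal{O}_X(m(K_X+\Delta))\ne 0$, use the GV property (Theorem \ref{ps14}) together with Theorem \ref{Hac04Vi} and Fourier--Mukai to get $V^0(\mathcal{F})\ne\emptyset$, extract a torsion point, and take a power. The one place where you are slightly imprecise is the torsion-translate step: the classical Green--Lazarsfeld--Simpson theorem addresses $R^if_*\omega_X$ for smooth $X$, not pluri-log-canonical pushforwards from a log canonical pair, so ``applicable via the setup of Theorem \ref{ps14}'' is not quite a citation. The paper invokes \cite[Theorem 1.3]{Shibata15}, which proves exactly the required structure theorem for $V^0(f_*\mathcal{O}_X(m(K_X+\Delta)))$ in the lc setting; with that reference in hand your argument is complete and matches the paper's.
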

\begin{proof}
If $\kappa(K_{F_{f}}+\Delta|_{F_{f}})\ge 0$, then $H^0(F_{f}, \mathcal{O}_{F_{f}}(m(K_{F_f}+\Delta|_{F_f})))\ne 0$ for a sufficiently divisible $m$ such that $m(K_X+\Delta)$ is Cartier. So by cohomology and base change we have that $f_*(\mathcal{O}_{X}(m(K_X+\Delta)))\ne 0$. 

Next we prove that $V^0(f_*\mathcal{O}_X(m(K_X+\Delta)))\ne 0$. By Theorem \ref{ps14} we know that $f_*\mathcal{O}_X(m(K_X+\Delta))$ is a GV sheaf, so by Theorem \ref{Hac04Vi},
$$V^i(f_*\mathcal{O}_X(m(K_X+\Delta)))\supseteq V^{i+1}(f_*\mathcal{O}_X(m(K_X+\Delta)))$$
for all $i\ge 0$. Suppose that $V^0(f_*\mathcal{O}_X(m(K_X+\Delta)))=0$. Then $V^i(f_*\mathcal{O}_X(m(K_X+\Delta)))=0$ for all $i$. By cohomology and base change we have $R\hat{S}(f_*\mathcal{O}_X(m(K_X+\Delta)))=0$ (cf. \cite[Proof of Proposition 2.13]{Wang15}). So by \cite[Theorem 2.2]{Mukai81} we get $f_*(\mathcal{O}_{X}(m(K_X+\Delta)))=0$,  which contradicts what we deduced in the last paragraph.

Now we have that $V^0(f_*\mathcal{O}_X(m(K_X+\Delta)))\ne 0$. On the other hand, by \cite[Theorem 1.3]{Shibata15}, $V^0(f_*\mathcal{O}_X(m(K_X+\Delta)))$ is a finite union of torsion translates of abelian subvarieties of ${\rm Pic}^0(A)$. In particular, 
$$0\ne H^0(A, f_*\mathcal{O}_X(m(K_X+\Delta))\otimes P)=H^0(X, \mathcal{O}_X(m(K_X+\Delta))\otimes f^*P)$$
for some torsion elements $P\in {\rm Pic}^0(A)$. This implies that $H^0(X, \mathcal{O}_X(m'(K_X+\Delta)))\ne 0$ for sufficiently divisible $m'$, hence $\kappa(K_X+\Delta)\ge 0$.
\end{proof}
\subsection{$F$-singularities}
We recall the definitions of some basic concepts of $F$-singularity. We refer to \cite{ST11}, \cite{PST14}, \cite{HX15} and \cite{DS15} for more details. Let $k$ be of characteristic $p>0$ and $X$ a normal projective variety over $k$. Let $(X,\Delta)$ be a pair such that the index of $K_X+\Delta$ is not divisible by $p$. Let
$$\mathcal{L}_{e,\Delta}:=\mathcal{O}_X((1-p^{e})(K_X+\Delta))$$
where $e$ is a positive integer where $e$ is a positive integer such that $(p^{e}-1)(K_X+\Delta)$ is integral. Then there is a canonically determined morphism $\phi_e:F_*^e\mathcal{L}_{e,\Delta}\to \mathcal{O}_X$ (see \cite[Section 2.3]{HX15}). We define the \emph{non-$F$-pure ideal} $\sigma(X,\Delta)$ (the \emph{test ideal} $\tau(X,\Delta)$) to be the unique biggest (smallest) ideal $J\subseteq\mathcal{O}_X$ such that $(\phi_e\circ F_X^e)(J\cdot\mathcal{L}_{e,\Delta})=J$ for any $e$. We say that $(X,\Delta)$ is \emph{sharply $F$-pure} (\emph{strongly $F$-regular}) if $\sigma(X,\Delta)=\mathcal{O}_X$ ($\tau(X,\Delta)=\mathcal{O}_X$). For an integral subscheme $Z\subseteq X$ with generic point $\eta_Z$, we say that $Z$ is an \emph{$F$-pure center} of $(X,\Delta)$ if
\begin{itemize}
\item $\Delta$ is effective at $\eta_Z$,
\item $(X,\Delta)$ is $F$-pure at $\eta_Z$, and
\item $\phi(F_*^eI_Z\cdot \mathcal{L}_{e,\Delta})\cdot\mathcal{O}_{X,\eta_Z}\subseteq I_Z\cdot\mathcal{O}_{X,\eta_Z}$.
\end{itemize}
\begin{lemma}\label{Fpureperturb}
Let $(X,\Delta)$ be a sharply $F$-pure pair such that the index of $(K_X+\Delta)$ is not divisible by $p$. Let $D$ be a divisor such that $D$ does not contain any $F$-pure center of $(X,\Delta)$. Then there is a positive integer $n$ such that $(X,\Delta+\dfrac{1}{n}D)$ is sharply $F$-pure.
\end{lemma}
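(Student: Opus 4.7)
The plan is to show that the non-sharp-$F$-pure locus of $(X, \Delta + \tfrac{1}{n}D)$ is empty for all sufficiently large $n$ chosen so that $n \mid p^{ke_0}-1$ for some large $k$. Since sharp $F$-purity is a local condition on $X$, we may verify it pointwise. At any point $x$ not contained in the support of $D$, the pair $(X, \Delta + \tfrac{1}{n}D)$ agrees with $(X, \Delta)$ in a neighborhood of $x$, so sharp $F$-purity is inherited; the task therefore reduces to points $x$ lying on $D$.

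Fix such an $x$ and fix $e_0 > 0$ with $(p^{e_0}-1)(K_X+\Delta)$ integral and $\phi_{e_0} : F_*^{e_0} \mathcal{L}_{e_0, \Delta} \to \mathcal{O}_X$ locally surjective at $x$; this exists because $(X,\Delta)$ is sharply $F$-pure, and the iterates $\phi_{k e_0}$ inherit the same surjectivity. For $n$ dividing $p^{k e_0}-1$ one has $\mathcal{L}_{k e_0,\, \Delta + \frac{1}{n} D} = \mathcal{L}_{k e_0, \Delta} \otimes \mathcal{O}_X \bigl(-\tfrac{p^{k e_0}-1}{n} D\bigr)$, and sharp $F$-purity of the perturbed pair at $x$ is equivalent to the image ideal
$$I_{k,n} := \phi_{k e_0} \bigl(F_*^{k e_0} \mathcal{L}_{k e_0,\, \Delta + \frac{1}{n} D}\bigr)$$
equalling $\mathcal{O}_X$ near $x$. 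The central claim is that as $n \to \infty$ (through divisors of $p^{k e_0}-1$), the zero loci $V(I_{k,n})$ shrink, and any limiting irreducible component must be an $F$-pure center of $(X, \Delta)$ contained in the support of $D$. By hypothesis no such center exists, and so $I_{k,n}$ becomes the unit ideal near $x$ for $n$ large enough.

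To globalize, I would argue that the failure loci $V\bigl(\sigma(X, \Delta + \tfrac{1}{n} D)\bigr)$ form a descending chain of closed subsets of $X$ as $n$ grows through the chosen arithmetic progression; by Noetherianity the chain stabilizes, and by the pointwise analysis the stable term is empty, yielding a uniform $n$ that works globally. The main obstacle is justifying the limiting correspondence of the previous paragraph: namely, that every irreducible component $W$ of $V\bigl(\sigma(X, \Delta + \tfrac{1}{n} D)\bigr)$ for $\tfrac{1}{n}$ small must be an $F$-pure center of $(X, \Delta)$ sitting inside the support of $D$. This should follow from the defining self-reproducing property $\phi_e\bigl(F_*^e(\sigma \cdot \mathcal{L}_{e,\, \Delta + \frac{1}{n} D})\bigr) = \sigma$ localized at the generic point of $W$, which in the limit degenerates to the $F$-pure center condition $\phi_e\bigl(F_*^e(I_W \cdot \mathcal{L}_{e, \Delta})\bigr) \cdot \mathcal{O}_{X,\eta_W} \subseteq I_W \cdot \mathcal{O}_{X,\eta_W}$ together with $W \subseteq \mathrm{Supp}(D)$ forced by the tensor twist by $\mathcal{O}_X\bigl(-\tfrac{p^e-1}{n}D\bigr)$. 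Making this degeneration rigorous, likely via a standard Schwede--Tucker-type analysis of non-$F$-pure ideals and their associated primes for slightly perturbed coefficients, is the essential technical content of the proof.
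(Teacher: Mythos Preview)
Your proposal is not a proof but a proof plan, and you explicitly flag the gap yourself: the claim that every irreducible component of the stabilized non-sharply-$F$-pure locus of $(X,\Delta+\tfrac{1}{n}D)$ must be an $F$-pure center of $(X,\Delta)$ contained in $\operatorname{Supp}(D)$ is precisely the substance of the lemma, and you do not prove it. The ``degeneration'' you sketch --- passing from the self-reproducing property of $\sigma(X,\Delta+\tfrac{1}{n}D)$ to the $F$-pure center condition for $I_W$ as $n\to\infty$ --- is not automatic. The non-$F$-pure ideal is not known to vary semicontinuously in the coefficient in a way that would let you read off its associated primes in the limit without further input; moreover, even granting that the limiting locus is $\phi_e$-stable, you still need to know that $(X,\Delta)$ is $F$-pure \emph{and} that the resulting center is not all of $X$ at the generic point of each component, which is part of what you are trying to establish.

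The paper takes a completely different and much more direct route: it stratifies $X$ by the minimal $F$-pure centers $W_i^k$ of $(X,\Delta)$, applies Schwede's $F$-adjunction (\cite[Main Theorem]{Schwede09}) to produce a strongly $F$-regular pair $(W_i^k,\Delta_{W_i^k})$ on each, perturbs there by $\tfrac{1}{n_i^k}D|_{W_i^k}$ (possible because $D$ does not contain $W_i^k$ and strongly $F$-regular pairs absorb small perturbations), and then invokes inversion of $F$-adjunction to lift sharp $F$-purity of the perturbed pair back to a neighborhood in $X$. Taking $n=\max\{n_i^k\}$ finishes. This avoids any limiting analysis of $\sigma$ and instead reduces everything to a finite check on finitely many centers. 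If you were to try to make your approach rigorous, the natural tool to identify the limiting components as $F$-pure centers is exactly this $F$-adjunction package --- at which point it is simpler to argue as the paper does.
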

\begin{proof}
We first construct a stratification for the $F$-pure centers of $(X,\Delta)$. Let $W_i^*(X,\Delta)$ be the union of $\le i$-dimensional $F$-pure centers of $(X,\Delta)$. Let
$$W_i(X,\Delta):=W_i^*(X,\Delta)\setminus W_{i-1}^*(X,\Delta)$$ 
and $W_i^k(X,\Delta)$ the irreducible components of $W_i(X,\Delta)$. Then $X$ is a disjoint union of $W_i^k(X,\Delta)$, and each $W_i^k(X,\Delta)$ is a minimal $F$-pure center of $(X,\Delta)$. In particular $W_i^k(X,\Delta)$ is normal. For each $W_i^k(X,\Delta)$, by \cite[(i) and (iv) of Main Theorem]{Schwede09} there is an effective divisor $\Delta_{W_i^k(X,\Delta)}$ on $W_i^k(X,\Delta)$ such that $(K_X+\Delta)|_{W_i^k(X,\Delta)}=K_{W_i^k(X,\Delta)}+\Delta_{W_i^k(X,\Delta)}$ and $(W_i^k(X,\Delta), \Delta_{W_i^k(X,\Delta)})$ is strongly $F$-regular. Then there is a positive integer $n_i^k>0$ such that $(W_i^k(X,\Delta), \Delta_{W_i^k(X,\Delta)}+\dfrac{1}{n_i^k}D|_{W_i^k(X,\Delta)})$ is strongly $F$-regular. Hence by \cite[(iii) of Main Theorem]{Schwede09}, $(X,\Delta+\dfrac{1}{n_i^k}D)$ is sharply $F$-pure near $W_i^k(X,\Delta)$. Finally let $n:={\rm max}\{n_i^k\}$ and we are done.
\end{proof}

Let $f:X\to Y$ be a morphism between normal projective varieties and $\mathcal{M}$ a line bundle on $X$. We define the subsheaf $S^0f_*(\sigma(X,\Delta)\otimes \mathcal{M})\subseteq f_*\mathcal{M}$ as
\begin{align}\label{S0definition}
\begin{split}
& S^0f_*(\sigma(X,\Delta)\otimes\mathcal{M}):=\bigcap_{g\ge 0}{\rm Image}(F_*^{ge}f_*(\mathcal{L}_{ge,\Delta}\otimes\mathcal{M}^{p^{ge}}) \xrightarrow{\Phi_{ge}} f_*\mathcal{M}) \\
\end{split}
\end{align}
where $\Phi_{ge}$ is induced by $\phi_{ge}$ using the projection formula and the commutativity of $F$ and $f$. In the case $Y={\rm Spec}\ k$, $f_*(\cdot)$ is the same as $H^0(\cdot)$, and in this case we use $S^0(X, \Delta; \mathcal{M})$ to denote $S^0f_*(\sigma(X,\Delta)\otimes \mathcal{M})$. Since $H^0(X,\mathcal{M})$ is finite dimensional, we have
\begin{align*}
S^0(X, \Delta; \mathcal{M})={\rm Image}(H^0(X, F_*^{me}(\mathcal{L}_{me,\Delta}\otimes\mathcal{M}^{p^{me}}))\xrightarrow{\Phi_{me}} H^0(X, \mathcal{M}))
\end{align*}
for some $m\gg 0$. 

The following definition is analogous to that in \cite[4.1]{HP13}.
\begin{definition}
Let $(X,\Delta)$ be a pair and $r>0$ an integer such that $r(K_X+\Delta)$ is Cartier. The \emph{Frobenius stable Kodaira dimension} of $(X,\Delta)$ is defined as
\begin{align*}
&\kappa_S(K_X+\Delta) \\
&:={\rm max}\{k|\dim S^0(X, \Delta; \mathcal{O}_X(mr(K_X+\Delta)))=O(m^k)\ {\rm for}\ m\ {\rm sufficiently\ divisible}\}.
\end{align*}
\end{definition}
It is easy to see that $\kappa_S(K_X+\Delta)$ is independent of $r$.
\subsection{A canonical bundle formula in characteristic $p$}
The following theorem is a slight modification of \cite[Theorem 4.3]{Wang15b}, which is a canonical bundle formula for morphism from a threefold to a surface whose general fibers are $\mathbb{P}^1$. The same proof in \cite{Wang15b} works.
\begin{theorem}\label{canonbund}
Let $f:X\to Y$ be a proper surjective morphism, where $X$ is a normal threefold and $Y$ is a normal surface over an algebraically closed field $k$ of characteristic $p>0$. Assume that $Q=\sum_i Q_i$ is a divisor on $Y$ such that $f$ is smooth over $(Y-{\rm Supp}(Q))$ with fibers isomorphic to $\mathbb{P}^1$. Let $D$ be an effective $\mathbb{Q}$-divisor on $X$, which satisfies the following conditions: 
\begin{enumerate}
\item $(X,D)$ is lc on a general fiber of $f$. 
\item Suppose $D=D^h+D^v$ where $D^h$ is the horizontal part and $D^v$ is the vertical part of $D$. Then $p={\rm char}(k)>\dfrac{2}{\delta}$, where $\delta$ is the minimum non-zero coefficient of $D^h$.
\item $K_X+D\sim_{\mathbb{Q}}f^*(K_Y+M)$ for some $\mathbb{Q}$-Cartier divisor $M$ on $Y$.
\end{enumerate}
Then $M$ is $\mathbb{Q}$-linearly equivalent to an effective $\mathbb{Q}$-divisor. 
\end{theorem}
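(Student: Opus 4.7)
The plan follows the scheme of \cite[Theorem 4.3]{Wang15b}, which the statement explicitly says carries over. The idea is to reduce the threefold-over-surface case to a surface-over-curve $\mathbb{P}^1$-fibration by restricting to a general very ample curve on $Y$, invoke a two-dimensional canonical bundle formula there, and then globalize. Concretely, I would pick a very general very ample divisor $H$ on $Y$ and set $C:=H$, $S:=f^{-1}(C)$, and $g:=f|_S\colon S\to C$. By Bertini (valid in any characteristic for sufficiently ample linear systems), $C$ is smooth, $S$ is normal, $C$ meets each component of $Q$ transversally at smooth points of $Y$, and $D|_S$ is in general position with respect to $g$. Since $S=f^{*}H$ is Cartier, $S|_S=g^{*}(C|_C)$, and adjunction on both $X$ and $Y$ turns the identity $K_X+D\sim_\mathbb{Q} f^{*}(K_Y+M)$ into
\[
K_S+D|_S\sim_\mathbb{Q} g^{*}(K_C+M|_C),
\]
inheriting all the remaining hypotheses: $(S,D|_S)$ is lc on a general fiber of $g$, the general fiber is $\mathbb{P}^1$, and the minimum nonzero coefficient of the horizontal part of $D|_S$ is still $\geq\delta$, so $p>2/\delta$ still applies.

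Next I would apply the canonical bundle formula to the surface-to-curve $\mathbb{P}^1$-fibration $g$: write $M|_C\sim_\mathbb{Q} B_C+M^{\mathrm{mod}}_C$, with $B_C=\sum_{t\in C}(1-c_t)[t]$ the discriminant formed from the log canonical thresholds of $(S,D|_S)$ along the fibers of $g$, and $M^{\mathrm{mod}}_C$ the moduli part. The lc hypothesis on a general fiber ensures $B_C\geq 0$. The hypothesis $p>2/\delta$ is precisely what forces the marked generic fiber $(\mathbb{P}^1,D^h|_F)$ to be sharply $F$-pure and enables a Frobenius-trace argument on the relative Frobenius of $g$ to conclude $M^{\mathrm{mod}}_C\sim_\mathbb{Q} 0$; this is the crux computation in \cite[Theorem 4.3]{Wang15b}. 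To globalize, define $B:=\sum_i b_{Q_i}Q_i$ on $Y$ directly, where $b_{Q_i}=1-\mathrm{lct}(X,D;\,f^{-1}Q_i)$ at the generic point of $Q_i$; the same $F$-singularity analysis yields $B\geq 0$. Because $B|_C=B_C$ for a general $C$ and $(M-B)|_C\sim_\mathbb{Q} 0$ for a moving family of curves sweeping out $Y$, $M-B$ is numerically trivial on $Y$; the triviality of the moduli $b$-divisor of a $\mathbb{P}^1$-fibration then upgrades this to $\mathbb{Q}$-linear equivalence, yielding $M\sim_\mathbb{Q} B\geq 0$.

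The main obstacle is establishing $\mathbb{Q}$-triviality of the moduli part in characteristic $p$. In characteristic zero this is traditionally handled by variation-of-Hodge-structure arguments that are unavailable here; the substitute is precisely the bound $p>2/\delta$, which guarantees sharp $F$-purity of the marked generic fiber $(\mathbb{P}^1,D^h|_F)$ and permits a direct Frobenius-trace computation on the relative Frobenius of $g$ (respectively $f$) to kill the moduli contribution. Adapting this trace-map argument of \cite[Theorem 4.3]{Wang15b} to the slightly modified hypothesis set here — in particular in the presence of non-reduced or wild vertical fibers recorded by $D^v$ — is where the technical work concentrates.
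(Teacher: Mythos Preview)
The paper does not supply an independent proof of this theorem: it records the statement as a slight modification of \cite[Theorem 4.3]{Wang15b} and asserts that ``the same proof in \cite{Wang15b} works.'' Your proposal explicitly defers to the same reference and offers a sketch of that argument, so at the level of approach you are aligned with the paper.

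Since you have gone further and outlined the actual mechanics, one point deserves care. Your globalization step---deducing $M\sim_{\mathbb{Q}}B$ on $Y$ from $(M-B)|_C\sim_{\mathbb{Q}}0$ on a moving family of very ample curves---only yields numerical triviality of $M-B$ a priori; upgrading to $\mathbb{Q}$-linear equivalence via ``triviality of the moduli $b$-divisor of a $\mathbb{P}^1$-fibration'' needs a precise formulation in characteristic $p$ and is not automatic. It is more likely that the argument in \cite{Wang15b} runs the Frobenius-trace computation directly on the relative Frobenius of $f\colon X\to Y$ (rather than on the restricted $g\colon S\to C$), producing the discriminant and moduli decomposition on $Y$ without a restriction-and-spread step. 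But as the paper under review furnishes no details beyond the citation, there is nothing further to compare against.
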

\section{Results in characteristic $p>0$}
In this section we first prove the following result about the surjectivity of Albanese morphism in positive characteristic.
\begin{proposition}\label{surj}
Let $(X,\Delta)$ be a projective pair in characteristic $p>0$. Assume that $\kappa_S(K_X+\Delta)=0$ and the index of $K_X+\Delta$ is not divisible by $p$. Then the Albanese morphism of $X$ is surjective.
\end{proposition}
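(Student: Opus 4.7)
The plan is to adapt the Frobenius-stable techniques of \cite{HP13, HP15b, HPZ17}---developed in the smooth and maximal Albanese dimension settings---to the present log pair context. I would argue by contradiction, supposing that $a_X: X \to A := {\rm Alb}(X)$ has image $V := a_X(X) \subsetneq A$. By the universal property of the Albanese morphism, $V$ generates $A$ as a group and is not contained in any translate of a proper abelian subvariety of $A$.

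Fix $r > 0$ with $p \nmid r$ such that $r(K_X + \Delta)$ is Cartier. The hypothesis $\kappa_S(K_X+\Delta) = 0$ guarantees that $S^0(X, \Delta; \mathcal{O}_X(m_0 r(K_X+\Delta))) \ne 0$ for some $m_0 \ge 1$. Consider the coherent sheaf
$$\mathcal{G} := S^0 a_{X,*}\bigl(\sigma(X,\Delta) \otimes \mathcal{O}_X(m_0 r(K_X+\Delta))\bigr)$$
on $A$. Then $H^0(A, \mathcal{G}) \ne 0$, so $\mathcal{G} \ne 0$, and $\mathcal{G}$ is set-theoretically supported on $V$.

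The heart of the argument is to invoke the positive-characteristic generic vanishing theorem for $S^0$-pushforwards --- the characteristic-$p$ analog of Theorem \ref{ps14} available under the hypothesis that the index of $K_X+\Delta$ is coprime to $p$ (cf.\ \cite{HP15b, HPZ17}). This will yield that $V^0(\mathcal{G})$ is a union of torsion translates of abelian subvarieties of ${\rm Pic}^0(A)$ and that $\mathcal{G}$ satisfies a Fourier--Mukai-type duality after sufficient Frobenius iteration. To extract the contradiction, I would exploit the étale covers $\pi_n: Y_n := X \times_{A,[n]} A \to X$ for $n$ coprime to $p$; since $\pi_n$ is étale, $\sigma(Y_n, \pi_n^*\Delta)=\pi_n^*\sigma(X,\Delta)$ and the decomposition $\pi_{n,*}\mathcal{O}_{Y_n} = \bigoplus_{\chi \in A[n]^\vee} a_X^*L_\chi$ gives
$$S^0\bigl(Y_n, \pi_n^*\Delta; \mathcal{O}_{Y_n}(m_0 r\pi_n^*(K_X+\Delta))\bigr) = \bigoplus_{\chi} S^0\bigl(X, \Delta; \mathcal{O}_X(m_0 r(K_X+\Delta)) \otimes a_X^*L_\chi\bigr).$$
Combining the GV structure of $\mathcal{G}$ (which forces many twists by torsion $L_\chi$ to contribute) with the fact that the image of $Y_n \to A$ is $[n]^{-1}(V)$, one rules out $V \subsetneq A$: since $V$ generates $A$, a proper set-theoretic support for $\mathcal{G}$ is incompatible with the cohomology support locus filling out ${\rm Pic}^0(A)$ through torsion translates.

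The main obstacle will be pinning down the correct generic vanishing / Fourier--Mukai statement for $\mathcal{G}$ in this log pair setting and verifying its compatibility with the étale base changes. This is precisely where the hypothesis $p \nmid {\rm ind}(K_X+\Delta)$ is indispensable: it ensures that the defining maps $\phi_e: F_*^e \mathcal{L}_{e,\Delta} \to \mathcal{O}_X$ of $\sigma(X,\Delta)$ exist, behave functorially under $\pi_n^*$, and commute (via the projection formula) with $a_{X,*}$. Once this GV-type result is established, the contradiction follows relatively formally from the interplay between the decomposition above and the support condition on $\mathcal{G}$.
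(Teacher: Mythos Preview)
Your plan diverges from the paper's argument in two essential ways, and the second is a genuine gap.

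\textbf{The missing reduction.} The paper does not work with $\mathcal{G}=S^0a_{X,*}(\sigma(X,\Delta)\otimes\mathcal{O}_X(m_0r(K_X+\Delta)))$ directly. From $\kappa_S(K_X+\Delta)=0$ and \cite[Lemma~4.1.3]{HP13} it first deduces $\kappa(K_X+\Delta)=0$, takes the unique $G\in|r(K_X+\Delta)|$, and sets $D:=\tfrac{r-1}{r}G$, so that $r(K_X+\Delta)\sim_{\mathbb{Q}}K_X+(\Delta+D)$. A direct computation on the unique section then shows that the Frobenius trace maps
\[
H^0(A,F_*^{(m+1)e}\Omega_0)\longrightarrow H^0(A,F_*^{me}\Omega_0),\qquad \Omega_0:=S^0a_{X,*}(\sigma(X,\Delta+D)\otimes\mathcal{O}_X(r(K_X+\Delta))),
\]
are \emph{isomorphisms}. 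This stabilization is what drives the rest of the proof; without the auxiliary $D$ you have no reason for the inverse system of $S^0$'s to behave well, and the generic-vanishing machinery of \cite{HP13} is stated for sheaves of the form $S^0a_*(\sigma(X,\Delta')\otimes\omega_X(\Delta'))$, i.e.\ with the twisted line bundle equal to $K_X+\Delta'$ itself.

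\textbf{The actual gap.} Your proposed contradiction---``a proper set-theoretic support for $\mathcal{G}$ is incompatible with $V^0(\mathcal{G})$ filling out $\widehat{A}$ through torsion translates''---does not hold. A nonzero sheaf supported at a single point of $A$ has $V^0$ equal to all of $\widehat{A}$; support of $\mathcal{G}$ and the cohomology support loci $V^i(\mathcal{G})$ constrain each other only weakly. The \'etale-cover decomposition you write is correct, but knowing that many torsion twists have nonzero $S^0$ says nothing about $\dim a_X(X)$. The paper's route is entirely different: using $\kappa=0$ and a pigeonhole argument on $T^{g+1}\to\widehat{A}$ it shows that $V^0(\mathcal{O}_X(r(K_X+\Delta)))$ is \emph{isolated at the origin}; via Fourier--Mukai this yields an Artinian summand $\mathcal{B}_m$ of $\mathcal{H}^0(R\widehat{S}(D_A(\Omega_m)))$ supported at $\hat{0}$; the stabilization above shows $\mathcal{B}_m\otimes k(0)\to\mathcal{B}_{m+1}\otimes k(0)$ is an isomorphism, hence $\varinjlim\mathcal{B}_m\ne 0$; and surjectivity of $a_X$ is then extracted from \cite[Corollary~3.2.3]{HP13}, which is the black box converting this nonvanishing into surjectivity. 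Your outline never reaches an input to which such a result applies.
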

\begin{proof}
For convenience, in this proof we use $a:X\to A$ to denote the Albanese morphism of $X$. $\kappa_S(K_X+\Delta)=0$ implies that there is a positive integer $r$ such that $r(K_X+\Delta)$ is integral and $S^0(X,\Delta;\mathcal{O}_X(r(K_X+\Delta)))\ne 0$. We fix an $e>0$ such that $p^e-1$ is divisible by the index of $K_X+\Delta$. Then for an $m\gg 0$ there is an element of $H^0(X,\mathcal{O}_X((p^{me}r+(1-p^{me}))(K_X+\Delta)))$ mapping to $0\ne f\in H^0(X,\mathcal{O}_X(r(K_X+\Delta)))$. By \cite[Lemma 4.1.3]{HP13} we know that $\kappa(K_X+\Delta)=0$, so that element can only be $\alpha f^{p^{me}+\frac{1-p^{me}}{r}}$ for some $\alpha\in k^*$. 
Let $G\in|r(K_X+\Delta)|$ be the unique element and $D:=\frac{r-1}{r}G$. Since the image of 
$$\alpha f\in H^0(X,\mathcal{O}_X(r(K_X+\Delta)))=H^0(X,\mathcal{O}_X(rp^{me}(K_X+\Delta)+(1-p^{me})(K_X+\Delta+D)))$$
in $H^0(X,\mathcal{O}_X(r(K_X+\Delta)))$ is computed by
$$\alpha f\mapsto \alpha f\cdot f^{\frac{r-1}{r}(p^{me}-1)}=\alpha f^{p^{me}+\frac{1-p^{me}}{r}}\mapsto\beta f,$$
we have actually proved that $S^0(X,\Delta+D; \mathcal{O}_X(r(K_X+\Delta)))\ne 0$. Then the natural inclusion (see \cite[Lemma 2.2.2]{HP13})
$$S^0(X,\Delta+D; \mathcal{O}_X(r(K_X+\Delta)))\subseteq H^0(A,\Omega_0)\subseteq H^0(X,\mathcal{O}_X(r(K_X+\Delta)))$$
are equalities, where $\Omega_0=S^0a_*(\sigma(X,\Delta+D)\otimes\mathcal{O}_X(r(K_X+\Delta)))$. We consider the following diagram
\begin{center}
\begin{tikzpicture}[scale=1.6]
\node (A) at (0,0) {$F_*^{(m+1)e}a_*\mathcal{O}_X(r(K_X+\Delta))$};
\node (B) at (4,0) {$F_*^{me}a_*\mathcal{O}_X(r(K_X+\Delta))$};
\node (C) at (0,1) {$F_*^{(m+1)e}\Omega_0$};
\node (D) at (4,1) {$F_*^{me}\Omega_0$};  
\node (E) at (0,-1) {$a_*F_*^{(m+1)e}\mathcal{O}_X(r(K_X+\Delta))$};
\node (F) at (4,-1) {$a_*F_*^{me}\mathcal{O}_X(r(K_X+\Delta))$};
\path[->,font=\scriptsize]
(A) edge node[above]{} (B)
(C) edge node[above]{} (D);
\draw[->,font=\scriptsize]
(C) edge node[right]{} (A)
(D) edge node[right]{} (B)
(A) edge node[right]{$=$} (E)
(B) edge node[right]{$=$} (F)
(E) edge node[above]{} (F);
\end{tikzpicture} 
\end{center}  
where the bottom two horizontal arrows are from \cite[Lemma 2.6]{Patakfalvi13}. After applying $H^0(A,\cdot)$ we get that the bottom row is an isomorphism as the stable image of these maps is exactly $S^0(X,\Delta+D;\mathcal{O}_X(r(K_X+\Delta)))=H^0(X,\mathcal{O}_X(r(K_X+\Delta)))$, which is proved above. Therefore the natural maps 
$$H^0(A,F_*^{(m+1)e}\Omega_0)\to H^0(A,F_*^{me}\Omega_0)$$
and 
$$H^0(X,F_*^{(m+1)e}\mathcal{O}_X(r(K_X+\Delta)))\to H^0(X,F_*^{me}\mathcal{O}_X(r(K_X+\Delta)))$$ 
are isomorphisms.

Now by assumption there is a unique $G\in |r(K_X+\Delta)|$. We use the following notation
$$\Omega_{m}:=F_*^{me}S^0a_*(\sigma(X,\Delta+D)\otimes\mathcal{O}_X(r(K_X+\Delta)))\ {\rm and}\ \Omega=\varprojlim\Omega_{m}.$$
It is easy to see that $r|(p^{me}-1)$ for every $m\ge 0$. According to \cite[Lemma 2.6]{Patakfalvi13} and \cite[Lemma 8.1.4]{BS13} we know that $\Omega_{(m+1)}\to\Omega_{m}$ is surjective for every $m\ge 0$. Further by the above argument, $S^0a_*(\sigma(X,\Delta+D)\otimes\mathcal{O}_X(r(K_X+\Delta)))\ne 0$. So for every $m$, $\Omega_{m}\ne 0$, hence $\Omega\ne 0$.

Next we claim that since $\kappa(K_X+\Delta)=0$, there is a neighborhoold $U$ of the origin of $\hat{A}$ such that 
$$V^0(\mathcal{O}_X(r(K_X+\Delta)))\cap U={0_{\hat{A}}}.$$
Let $g:=\dim A$. Suppose that this is not the case and let $T\in \hat{A}$ be a positive dimensional irreducible component of $V^0(\mathcal{O}_X(r(K_X+\Delta)))$ through the origin. Let $\xi:T^{g+1}\to \hat{A}$ be the natural morphism. For dimensional reason, every fiber of $\xi$ is of positive dimension. Moreover since $0\in T$, $0$ is in the image of $\xi$. Consider for $(P_1,...,P_{g+1})\in\xi^{-1}(\mathcal{O}_{A})$ the maps
\begin{align*}
& H^0(X,\mathcal{O}_X(r(K_X+\Delta))\otimes a^*P_1)\otimes...\otimes H^0(X,\mathcal{O}_X(r(K_X+\Delta))\otimes a^*P_{g+1}) \\
& \to H^0(X,\mathcal{O}_X((g+1)r(K_X+\Delta))).
\end{align*}
Since $h^0(X,\mathcal{O}_X((g+1)r(K_X+\Delta)))=1$, there are only finitely many choices for the divisors in $|r(K_X+\Delta)+a^*P_i|$ for $P_i\in p_i(\xi^{-1}(\mathcal{O}_{A}))$. However there exists an $i$ such that $p_i(\xi^{-1}(\mathcal{O}_{A}))$ is an infinite set, and we also have $({\rm Pic}^0(X))_{\rm red}={\rm Pic}^0(A)=\hat{A}$ (see \cite[Proposition A.6]{Mochizuki12}). In particular the map ${\rm Pic}^0(A)\to{\rm Pic^0(X)}$ is injective. So we get a contradiction, hence the claim holds.

Since $H^0(A,S^0a_{X,*}(\sigma(X,\Delta+D)\otimes\mathcal{O}_X(r(K_X+\Delta)))\otimes P)\subseteq H^0(X,\mathcal{O}_X(r(K_X+\Delta))\otimes a^*P)$ we have that $H^0(A,\Omega_0\otimes P)$ is zero for every $\mathcal{O}_A\ne P\in U$. Moreover $H^0(A,\Omega_0)\ne 0$ as shown above, so by \cite[Corollary 3.2.1]{HP13} we have that $\mathcal{H}^0(\Lambda_0)$ is Artinian
in a neighborhood of the origin and $\mathcal{H}^0(\Lambda_0)\otimes k(0)\ne 0$, where 
$$\Lambda_m:=R\hat{S}(D_A(\Omega_m)).$$ 
Therefore we can write $\mathcal{H}^0(\Lambda_0)$ as $\mathcal{H}^0(\Lambda_0)=\mathcal{C}_0\oplus\mathcal{B}_0$, where $\mathcal{B}_0$ is Artinian and supported at $0$ and $0\notin{\rm Supp}\ \mathcal{C}_0$. This induces a similar decomposition $\mathcal{C}_m\oplus\mathcal{B}_m$ of $\mathcal{H}^0(\Lambda_m)\cong V^*\mathcal{H}^0(\Lambda_0)$, where $V$ is the Verschiebung isogeny (see \cite[(5.18) Definition]{MvdG}). Furthermore, the morphism $\mathcal{H}^0(\Lambda_m)\to\mathcal{H}^0(\Lambda_{m+1})$ is the direct sum of morphisms $\mathcal{C}_m\to\mathcal{C}_{m+1}$ and $\mathcal{B}_m\to\mathcal{B}_{m+1}$. Hence
$$\Lambda:=\varinjlim\Lambda_m=(\varinjlim\mathcal{C}_m)\oplus(\varinjlim\mathcal{B}_m),$$
where $\mathcal{B}_m$ is Artinian for every $m\ge 0$.

Now by \cite[Corollary 3.2.1]{HP13} we have $\mathcal{H}_m\otimes k(0)\cong \mathcal{B}_m\otimes k(0)\cong H^0(A,F_*^m\Omega_0)^{\vee}$, so the morphism $\mathcal{B}_m\otimes k(0)\to\mathcal{B}_{m+1}\otimes k(0)$ can be identified with $H^0(A,F_*^m\Omega_0)^{\vee}\to H^0(A,F_*^{m+1}\Omega_0)^{\vee}$. However by the above argument the latter morphism is an isomorphism, and $H^0(A,\Omega_0)\ne 0$. Therefore we know that $\varinjlim(\mathcal{B}_m\otimes k(0))$, hence $\varinjlim\mathcal{B}_m$, is nonzero. Finally by \cite[Corollary 3.2.3]{HP13} we are done.
\end{proof}
Proposition \ref{surj} then implies that
\begin{theorem}\label{maincharp}
Let $(X,\Delta)$ be a projective pair in characteristic $p>0$.  Assume that $\kappa_S(X+\Delta)=0$, $\dim({\rm Alb}(X))=\dim X$ and the index of $K_X+\Delta$ is not divisible by $p$. Then $X$ is birational to an ordinary abelian variety.
\end{theorem}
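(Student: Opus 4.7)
The plan is to apply Proposition~\ref{surj} to reduce to the case that the Albanese morphism is surjective, and then to bootstrap the Fourier--Mukai analysis carried out in its proof to upgrade surjectivity to birationality together with ordinariness of the target.

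First, since $\kappa_S(K_X+\Delta)=0$ and the index of $K_X+\Delta$ is not divisible by $p$, Proposition~\ref{surj} immediately yields that $a_X\colon X\to A:=\mathrm{Alb}(X)$ is surjective. Combined with the hypothesis $\dim A=\dim X$, this forces $a_X$ to be generically finite and surjective of some degree $d\ge 1$.

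To upgrade this to birationality ($d=1$), I would further analyze the sheaves built in the proof of Proposition~\ref{surj}. That proof produced a nonzero subsheaf $\Omega\subseteq a_{X,*}\mathcal{O}_X(r(K_X+\Delta))$ whose Fourier--Mukai transform $\Lambda=\varinjlim\Lambda_m$ contains a nonzero Artinian summand $\varinjlim\mathcal{B}_m$ concentrated at $\hat 0$ and stabilized by the Verschiebung $V$. Since $a_X$ is now generically finite, $a_{X,*}\mathcal{O}_X(r(K_X+\Delta))$ has generic rank exactly $d$. Applying Mukai's equivalence $R^g\hat S$ between unipotent vector bundles on $A$ and Artinian $\mathcal{O}_{\hat A,\hat 0}$-modules of finite length (recalled in the preliminaries) identifies this Artinian summand with a unipotent subsheaf of $\Omega$. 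Together with the uniqueness of the divisor $G\in|r(K_X+\Delta)|$ extracted in the proof of Proposition~\ref{surj}, the rank comparison should force this unipotent piece to be $\mathcal{O}_A$, whence $d=1$ and $a_X$ is birational.

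For ordinariness of $A$, I would use the fact, established in the course of proving Proposition~\ref{surj}, that the transition morphisms $\mathcal{B}_m\otimes k(\hat 0)\to\mathcal{B}_{m+1}\otimes k(\hat 0)$ are isomorphisms for every $m$. Via Mukai's equivalence these correspond to the action of the Verschiebung on the cotangent space to $\hat A$ at $\hat 0$, so its bijectivity is precisely the characterization of $A$ as an ordinary abelian variety.

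The main obstacle will be the rank comparison in the birationality step: extracting $d=1$ requires matching the generic rank of $a_{X,*}\mathcal{O}_X(r(K_X+\Delta))$ with the structure of its Fourier--Mukai dual while exploiting the Frobenius-stable data produced in Proposition~\ref{surj}, in the spirit of \cite{HP13} and \cite{HPZ17}.
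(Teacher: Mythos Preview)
Your proposal diverges substantially from the paper's argument, and the route you sketch has genuine gaps.

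After invoking Proposition~\ref{surj} to get that $a_X$ is surjective (which you also do), the paper does \emph{not} re-enter the Fourier--Mukai machinery. Instead it proceeds in three short steps. First, it cites \cite[Proposition 1.4]{HPZ17} to conclude that $a_X$ is \emph{separable}; this is a nontrivial positive-characteristic input that you never invoke, and without it one cannot even write down a ramification formula. Second, separability gives $K_X=a_X^*K_{\mathrm{Alb}(X)}+R$ with $R\ge 0$, and combining with $\kappa(K_X+\Delta)=\kappa_S(K_X+\Delta)=0$ (established inside the proof of Proposition~\ref{surj}) yields the chain
\[
0=\kappa(K_X+\Delta)\ge\kappa(K_X)\ge\kappa(K_{\mathrm{Alb}(X)})=0,
\]
forcing $\kappa(X)=0$. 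Third, since $X$ now has maximal Albanese dimension and $\kappa(X)=0$, the boundary-free result \cite[Theorem 0.1(2)]{HP15b} already gives that $X$ is birational to an ordinary abelian variety. In other words, the whole point is to strip off $\Delta$ and reduce to the known case.

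Your attempt to extract birationality and ordinariness directly from the Fourier--Mukai data does not work as stated. For birationality, the existence of an $\mathcal{O}_A$-summand (or a unipotent line subbundle) inside $a_{X,*}\mathcal{O}_X(r(K_X+\Delta))$ does not bound its generic rank $d$; you yourself flag this ``rank comparison'' as the main obstacle and do not resolve it. For ordinariness, your argument is incorrect: the isomorphisms $\mathcal{B}_m\otimes k(\hat 0)\to\mathcal{B}_{m+1}\otimes k(\hat 0)$ in the proof of Proposition~\ref{surj} are, by \cite[Corollary 3.2.1]{HP13}, dual to the maps $H^0(A,F_*^{m+1}\Omega_0)\to H^0(A,F_*^{m}\Omega_0)$ between \emph{one-dimensional} spaces (since $h^0=1$). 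Their bijectivity says nothing about the Verschiebung acting on the $g$-dimensional cotangent space of $\hat A$, which is what characterizes ordinariness.
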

\begin{proof}
By Proposition \ref{surj}, $a_X$ is surjective, hence generically finite. By \cite[Proposition 1.4]{HPZ17}, $a_X$ is separable. So we have $K_X=a_X^*K_{{\rm Alb}(X)}+R$, where $R$ is an effective $\mathbb{Q}$-divisor supported on the ramification locus of $a_X$. This implies that $0=\kappa_S(K_X+\Delta)=\kappa(K_X+\Delta)\ge \kappa(K_X)\ge \kappa(K_{{\rm Alb}(X)})=0$ (note that we have proved $\kappa(K_X+\Delta)=\kappa_S(K_X+\Delta)=0$ in the proof of Proposition \ref{surj}), which forces that $\kappa(X)=0$, and by \cite[Theorem 0.1 (2)]{HP15b} we are done.
\end{proof}
The second main theorem in this section is a characteristic $p$ analog of \cite[Corollary 2]{Zhang05} in dimension 3. Before we present that we would like to point out that \cite[Corollary 2]{Zhang05} does not hold without the assumption that $(X,D)$ is lc. This is illustrated by the following
\begin{example}\label{surfnonsurj}
Let $C$ be a smooth non-hyperelliptic curve of genus $g\ge 2$. By \cite[Chapter IV, Proposition 5.2]{Hartshorne77}, $K_C$ is very ample. Let $C\hookrightarrow\mathbb{P}^r$ be the embedding induced by $|K_C|$. Let $S$ be the projective cone over $C$, $\nu: S'\to S$ the blow-up at the vertex of $S$, $\pi:S'\to C$ the induced morphism and $E$ the exceptional divisor of $\nu$. We then have that $E^2=-\deg K_C=2-2g$, and we can write
$$\nu^*K_S=K_{S'}+\lambda E$$
for some $\lambda$. By adjunction formula we see that
\begin{align*}
2g-2=(K_{S'}+E)\cdot E=(1-\lambda)E^2=(\lambda-1)(2g-2),
\end{align*}
so $\lambda=2$. If we denote a fiber of $\pi$ by $f$, then
\begin{align*}
& h^0(S,\mathcal{O}_S(K_S))=h^0(S',\nu^*\mathcal{O}_{S}(K_{S}))=h^0(S',\mathcal{O}_{S'}(K_{S'}+2E)) \\
& =h^0(S',\mathcal{O}_{S'}(-2E+(2g-2-\deg K_C)f+2E))=h^0(S',\mathcal{O}_{S'})=1,
\end{align*}
where the third equality is implied by the fact that $K_{S'}\sim_{\rm lin}-2E+(2g-2-\deg K_C)f$ as $K_C$ is very ample. In the above calculation we actually get that $\nu^*\mathcal{O}_{S}(K_{S})=\mathcal{O}_{S'}$, Hence $K_S\sim_{\rm lin}0$. In particular $-K_S$ is nef and $\kappa(S)=0$. On the other hand, by the construction of $S$ we know that its Albanese map is not surjective. When $g=2$, $(S', 2E)$ provides a counter-example to Theorem \ref{HPgen} and \ref{Notalgfibdom} without the log canonical assumption. We also note that by the argument in Example \ref{mamodiff}, the Albanese morphism of $S$ is the morphism from $S$ to a point. In particular it is an algebraic fiber space, hence surjective.
\end{example}
We now show the second main theorem. For convenience we give the following definition.
\begin{definition}\label{bpfindex}
For a $\mathbb{Q}$-Cartier divisor $D$ on a projective variety $X$, we define the \emph{base-point-free index} of $D$ to be the smallest number $m>0$ such that $|mD|$ is base point free and denote it as $i_{\rm bpf}(D)$.
\end{definition}
\begin{theorem}\label{surj3d}
Let $X$ be a normal projective threefold over an algebraically closed field $k$ of characteristic $p>0$, and $(X,\Delta)$ a pair such that the coefficients of the components of $\Delta$ are $\le 1$ and $-(K_X+\Delta)$ is semiample.  Suppose that 
\begin{enumerate}[label={\rm (\arabic*)}]
\item $\displaystyle p>\max\{\dfrac{2}{\delta}, i_{\rm bpf}(-(K_X+\Delta))\}$, where $\delta$ is the minimal non-zero coefficient of $\Delta$. 
\item There is a fiber $X_0$ of the Albanese morphism $a_X$ of $X$ such that $X_0$ is normal and $(X_0,\Delta|_{X_0})$ is sharply $F$-pure. \label{sFp}
\end{enumerate}
Then $a_X$ is surjective.
\end{theorem}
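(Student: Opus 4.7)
The plan is to argue by contradiction and derive the absurdity from Theorem \ref{canonbund}. Assume $a_X$ is not surjective, set $Y := a_X(X) \subsetneq A := \mathrm{Alb}(X)$, and take the Stein factorization of $a_X$ to obtain $X \xrightarrow{f} Y' \xrightarrow{\nu} Y \hookrightarrow A$, with $Y'$ normal and $f$ of connected fibres. Since $Y$ generates $A$ but is a proper subvariety, it is not a translate of any abelian subvariety; Ueno's structure theorem for subvarieties of abelian varieties then yields $\kappa(Y') \ge 1$.

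The first step is to show that $\dim Y' = 2$ and that the general fibre of $f$ is $\mathbb{P}^1$. The cases $\dim Y' \in \{0, 3\}$ are disposed of easily: $\dim Y' = 0$ forces $A$ to be a point (so $a_X$ is trivially surjective), while $\dim Y' = 3$ makes $f$ generically finite, incompatible with the positive-dimensional normal fibre $X_0$ supplied by condition \ref{sFp}. The remaining case $\dim Y' = 1$ is handled separately by analyzing the surface fibres of $f$ directly, combining the semiampleness of $-(K_X+\Delta)|_{F}$ with condition \ref{sFp}. When $\dim Y' = 2$, adjunction and semiampleness give that a general fibre $F$ of $f$ is a smooth curve with $-(K_F + \Delta|_F)$ semiample, forcing $g(F) \le 1$. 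The elliptic case $g(F) = 1$ must be excluded using condition \ref{sFp}: an elliptic fibration structure would, via the relative Jacobian, extend $\mathrm{Alb}(X)$ nontrivially and contradict its universal property, and the sharp $F$-purity of the chosen fibre rules out the pathological quasi-elliptic behaviour that can arise in small characteristic.

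The second step is to feed $f$ into Theorem \ref{canonbund}. Since $|-i_{\rm bpf}(K_X+\Delta)|$ is base-point-free, choose a general member $G$ and put $D := \Delta + \tfrac{1}{i_{\rm bpf}}G$, so that $K_X + D \sim_{\mathbb{Q}} 0$ and $(X,D)$ remains lc on the general fibre $F \cong \mathbb{P}^1$ by genericity of $G$. The numerical hypothesis $p > \max\{2/\delta,\, i_{\rm bpf}\}$ is designed exactly so that the minimal non-zero coefficient of the horizontal part $D^h$ is bounded below by a quantity compatible with $p > 2/\delta$ on a suitable resolution $S \to Y'$ (this may require a careful choice of $G$ so that its horizontal components do not drive the coefficient below $\delta$, or an argument that absorbs the new horizontal coefficient into the bound). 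Theorem \ref{canonbund} then produces an effective $\mathbb{Q}$-divisor $M$ on $S$ with $K_S + M \sim_{\mathbb{Q}} 0$.

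This finally yields the contradiction: $K_S \sim_{\mathbb{Q}} -M$ with $M$ effective forces $\kappa(S) \le 0$, but $S$ is birational to $Y'$ and $Y'$ maps finitely onto $Y \subset A$, so $\kappa(S) \ge \kappa(Y) \ge 1$ by Ueno—a contradiction, completing the proof that $a_X$ must be surjective. The hardest parts will be \emph{(i)} ruling out elliptic (or quasi-elliptic) fibres using only the sharp $F$-purity hypothesis, since the characteristic-zero Hodge-theoretic input is unavailable, and \emph{(ii)} verifying the precise numerical coefficient hypothesis of Theorem \ref{canonbund} from the given lower bound $p > i_{\rm bpf}$ rather than something like $p > 2i_{\rm bpf}$, which may require a delicate choice of $G$ relative to the fibration.
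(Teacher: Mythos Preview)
Your overall architecture---Stein factorization followed by a case analysis on $\dim Y'$, with Theorem~\ref{canonbund} handling the $\mathbb{P}^1$-fibration case---matches the paper's. The contradiction framing via Ueno's $\kappa(Y')\ge 1$ is a reasonable repackaging of the paper's direct approach (the paper instead shows $\kappa(Y)\le 0$ case by case and then invokes Lemma~\ref{surjsurface}). However, two of your case arguments contain genuine gaps.

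First, in the $\dim Y'=3$ case you claim condition~\ref{sFp} supplies a positive-dimensional fibre, but it does not: a reduced point is normal and the pair on it is trivially sharply $F$-pure. The fix in your framework is easy: the Stein factorization map $f\colon X\to Y'$ satisfies $f_*\mathcal{O}_X=\mathcal{O}_{Y'}$, so when $\dim Y'=3$ it is birational, whence $\kappa(X)=\kappa(Y')\ge 1$, contradicting $\kappa(X)\le 0$ (which follows from $-K_X\sim_{\mathbb{Q}}\Delta+\tfrac{1}{m}G$ with $G\in|-m(K_X+\Delta)|$ effective). The paper instead observes that normality of $X_0$ forces $a_X$ to be separable and cites \cite[Theorem~0.2]{HP15b}.

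Second, your exclusion of the elliptic-fibre case $g(F)=1$ via the relative Jacobian does not work: the Jacobian fibration $J\to Y'$ is an elliptic surface, not an abelian variety, and there is no reason the rational map $X\dashrightarrow J$ should produce a morphism to an abelian variety not already factoring through $\mathrm{Alb}(X)$. The paper makes no attempt to exclude this case. Instead, when $\dim Y'=2$ and $g(F)\ge 1$, it passes to smooth models and invokes \cite[3.2]{CZ15} to obtain $\kappa(\omega_{X/Y})\ge 0$; combined with $\kappa(X)\le 0$ this forces $\kappa(Y)\le 0$, which in your contradiction setup is exactly what you need.

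Finally, be aware that the $\dim Y'=1$ case is where hypothesis~\ref{sFp} does the real work, and the paper's argument there is substantial: it combines Lemma~\ref{Fpureperturb}, \cite[Theorem~B]{PSZ13}, the perturbation technique of \cite{Tanaka15b}, and the semipositivity theorem \cite[Theorem~3.10]{Patakfalvi14} to conclude that $-K_Y$ is nef, forcing $Y$ to be an elliptic curve. Your one-line sketch of this case will need to be expanded along these lines.
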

\begin{lemma}\label{surjsurface}
Let $X$ be a normal projective surface. Suppose that $X$ has at most log canonical singularities and $\kappa(X)=0$. Then the Albanese map (hence the Albanese morphism) of $X$ is surjective.
\end{lemma}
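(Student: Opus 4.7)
The plan is to pass to a smooth model and analyse by Kodaira dimension, applying the classification of projective surfaces. Since $\mathfrak{Alb}$ is a birational invariant (Remark \ref{birsurAlb}), it suffices to show surjectivity of the Albanese map of a resolution $\pi:\tilde X\to X$. Write $K_{\tilde X}=\pi^{*}K_{X}+\sum_{i}a_{i}E_{i}$ with $a_{i}\ge -1$ by the lc hypothesis, and set $\Gamma=\sum_{a_{i}<0}(-a_{i})E_{i}\ge 0$. Then $K_{\tilde X}+\Gamma=\pi^{*}K_{X}+\sum_{a_{i}\ge 0}a_{i}E_{i}$ differs from $\pi^{*}K_{X}$ by an effective $\pi$-exceptional divisor, so pushing forward gives $\kappa(K_{\tilde X}+\Gamma)=\kappa(K_{X})=0$. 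Since $\Gamma\ge 0$, it follows that $\kappa(\tilde X)\le 0$, hence $\kappa(\tilde X)\in\{-\infty,0\}$.

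If $\kappa(\tilde X)=0$, contracting $(-1)$-curves yields a smooth minimal model $Y$ with $\kappa(Y)=0$. By the Bombieri--Mumford classification of such surfaces (valid in arbitrary characteristic), $Y$ is an abelian surface, a (quasi-)hyperelliptic surface, a K3 surface, or an Enriques surface. In the K3 and Enriques cases $q(Y)=0$, hence $\mathfrak{Alb}(X)=\mathfrak{Alb}(Y)=0$ and surjectivity is automatic. In the abelian case the Albanese morphism of $Y$ is an isomorphism, and in the (quasi-)hyperelliptic case it is a surjective fibration onto an elliptic curve; in either case surjectivity transfers to $\tilde X$ and then to $X$.

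If $\kappa(\tilde X)=-\infty$, then $\tilde X$ is birationally ruled, and after possibly further blowups we obtain a morphism $f:\tilde X\to C$ with general fibre $\mathbb{P}^{1}$ onto a smooth curve $C$ (for instance, the ruling onto the base of minimal genus, which has genus $q(\tilde X)$). If $g(C)\le 1$, then $\mathfrak{Alb}(\tilde X)$ is either trivial or the elliptic curve $C$, and the Albanese map of $\tilde X$ factors through $f$ and is surjective. It remains to exclude $g(C)\ge 2$. The key input, and the main technical obstacle, is the classical fact that every irreducible component of the exceptional divisor of a resolution of an lc surface singularity is, up to normalization, a rational or elliptic curve; no such curve admits a non-constant morphism to a curve of genus $\ge 2$, so every component of $\Gamma$ is vertical for $f$. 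Therefore $\Gamma\cdot F=0$ for a general fibre $F\cong\mathbb{P}^{1}$ of $f$, and
\[
(K_{\tilde X}+\Gamma)\cdot F \;=\; K_{\tilde X}\cdot F \;=\; -2.
\]
Since $\{F\}$ sweeps out $\tilde X$ in a covering family, this forces $\kappa(K_{\tilde X}+\Gamma)=-\infty$, contradicting the equality $\kappa(K_{\tilde X}+\Gamma)=0$ established above.
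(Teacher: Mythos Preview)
Your proof is correct and follows essentially the same approach as the paper: reduce to a smooth model, handle the $\kappa=0$ case by surface classification, and in the $\kappa=-\infty$ case use that exceptional curves over lc surface singularities have genus at most $1$ (the paper cites \cite[3.30]{Kollar13} for this) to bound the genus of the base curve of the ruling. The only cosmetic difference is that the paper takes a minimal resolution and argues directly that some exceptional divisor must dominate $C$, whereas you take an arbitrary resolution and phrase the same step as a contradiction via intersection with a general fibre; note also that the phrase ``after possibly further blowups'' is unnecessary, since any smooth surface with $\kappa=-\infty$ already admits a morphism to its minimal model and hence to $C$.
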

\begin{proof}
Let $\mu:X'\to X$ be a minimal resolution of singularities for $X$, then we have $\kappa(X')\le\kappa(X)$. 

If $\kappa(X')=0$ then this is straight forward by the Enriques-Kodaira classification. 

If $\kappa(X')=-\infty$ then there is a birational morphism $g:X'\to X''$ where $X''$ is a ruled surface over a curve $C$ with a $\mathbb{P}^1$ fibration $\pi:X''\to C$. We can write 
$$K_{X'}=\mu^*K_X-\sum_ia_iE_i=g^*K_{X''}+\sum_{j}b_jF_j$$ 
where $a_i, b_j>0$ and $E_i$ and $F_j$ are exceptional curves on $X'$ for $\mu$ and $g$ respectively. By assumption we have 
\begin{align*}
0\ne H^0(X, \mathcal{O}_X(mK_X)) & =H^0(X', \mu^*\mathcal{O}_{X}(mK_X))=H^0(X', \mathcal{O}_{X'}(m(K_{X'}+\sum_ia_iE_i))) \\
& =H^0(X', \mathcal{O}_{X'}(m(g^*K_{X''}+\sum_{j}b_jF_j+\sum_ia_iE_i))) \\
& \subseteq H^0(X'', \mathcal{O}_{X''}(m(K_{X''}+\sum_ia_ig_*E_i)))
\end{align*}
for any sufficiently divisible $m$. Since $X''$ is a ruled surface, there must be an $i$ such that $g_*E_i$ dominates the base curve $C$. Since $X$ is log canonical, by \cite[3.30]{Kollar13} the genus of $E_i$, hence that of $C$, is at most 1. In particular $C$ is surjective onto its Jacobian. So according to the structure of $a_{X''}:X''\to {\rm Alb}(X'')$ explained in Example \ref{mamodiff} we are done.
\end{proof}
\begin{proof}[Proof of Theorem \ref{surj3d}]
Let $X\xrightarrow{a_X} Z=a_X(X)\subseteq {\rm Alb}(X)$ be the Albanese morphism of $X$. We denote the Stein factorization of $a_X$ by 
$$X\xrightarrow{p}Y\xrightarrow{q}Z.$$
We can assume that $Y$ is normal. As is shown in Lemma \ref{Albeq} below, it suffices to show that the Albanese morphism of $Y$ is surjective. We consider the following cases according to the dimension of $Y$ (or equivalently, that of $Z$). \\

\noindent\emph{Case 1.} If $\dim Y=0$, this is trivial. \\

\noindent\emph{Case 2.} If $\dim Y=1$, then $Y$ is smooth. Since $|-m(K_X+\Delta)|$ is base point free, we can choose a suitable prime divisor $D\sim_{\rm lin}-m(K_X+\Delta)$ such that $p>m$ and $D|_{X_0}$ does not contain any $F$-pure center of $(X_0,\Delta|_{X_0})$. Then by Lemma \ref{Fpureperturb} there is a positive integer $n$ such that $(X_0,\Delta|_{X_0}+\dfrac{1}{n}D|_{X_0})$ is sharply $F$-pure. Then by applying \cite[Theorem B]{PSZ13} on the projection $X_0\times\mathbb{P}H^0(X_0,\mathcal{O}_{X_0}(D|_{X_0}))\to \mathbb{P}H^0(X_0,\mathcal{O}_{X_0}(D|_{X_0}))$ we know that there is a positive integer $n$ such that $(X_0,\Delta|_{X_0}+\dfrac{1}{n}D_{X_0})$ is sharply $F$-pure for any divisor $D_{X_0}\in|D|_{X_0}|$ in a neighborhood of $D|_{X_0}$. In particular the pair
$$(X_0,\Delta|_{X_0}+\dfrac{1}{mn(\dim X_0+1)}(D_{X_0}^1+...+D_{X_0}^{\dim X_0+1}))$$
is sharply $F$-pure for any $D_{X_0}^i\in|D|_{X_0}|$ in a neighborhood of $D|_{X_0}$. So by using the techniques in \cite[Idea of Theorem 1]{Tanaka15b} we get that there is a $\mathbb{Q}$-divisor $D'\sim_{\mathbb{Q}}\dfrac{1}{m}D$ such that $(X_0,(\Delta+D')|_{X_0})$ is sharply $F$-pure. Since $X_0$ is normal it is $S_2$, $G_1$ and regular in codimension 1. Moreover we have $\displaystyle K_X+\Delta+D'\sim_{\mathbb{Q}}K_X+\Delta+\frac{1}{m}D\sim_{\mathbb{Q}}0$. So by \cite[Theorem 3.10]{Patakfalvi14} we have that $K_{X/Y}+\Delta+D'$ is nef, hence $-K_Y$ is nef. Since $Z$ generates ${\rm Alb}(X)$, $Y$ and $Z$ have to be elliptic curves and we are done. \\

\noindent\emph{Case 3.} If $\dim Y=2$, we denote a general fiber of $p$ by $F$. By assumption $F$ is a smooth curve. We observe that $\kappa(Y)\ge 0$, otherwise $Y$, hence $Z$, is covered by rational curves and this would be a contradiction to the construction that $Z\subseteq {\rm Alb}(X)$. We consider the following subcases. \\

\hangindent=8mm\emph{Subcase 1.} If $g(F)=0$, then $F\cong\mathbb{P}^1$. In this case we can choose the prime divisor $D\sim_{\rm lin}-m(K_X+\Delta)$ such that $\displaystyle (F,(\Delta+\frac{1}{m}D)|_F)$ is lc. By Theorem \ref{canonbund} there is an effective $\mathbb{Q}$-divisor $M$ on $Y$ such that 
$$0\sim_{\mathbb{Q}}K_{X}+\Delta+\frac{1}{m}D\sim_{\mathbb{Q}}p^*(K_{Y}+M),$$ 
hence $K_{Y}+M\sim_{\mathbb{Q}}0$ and in particular $\kappa(Y)\le 0$. Let $\nu:Y'\to Y$ be a minimal resolution of $Y$. If $M\ne 0$ or $Y$ has non-canonical singularity, then $\kappa(Y')<0$ and $Y'$ is birationally ruled. This is a contradiction to the fact that $Y$ has a finite map $q:Y\to Z\subseteq {\rm Alb}(X)$ onto its image. Therefore $Y$ has canonical singularities and by Lemma \ref{surjsurface} we are done. \\

\setlength\parindent{12pt}\hangindent=8mm\emph{Subcase 2.} If $g(F)\ge 1$, then after possibly doing a base change and a resolution, WLOG we can assume that both $X$ and $Y$ are smooth. Then by \cite[3.2]{CZ15} we have $\kappa(\omega_{X/Y})\ge 0$. This together with $\kappa(X)\le 0$ implies that $\kappa(Y)\le 0$. Again by the argument in \emph{Subcase 1} and Lemma \ref{surjsurface} we are done. \\

\hangindent=0mm
\noindent\emph{Case 4.} If $\dim Y=3$ then the $a_X$ is generically finite. Moreover since $X_0$ is normal it is reduced, hence $a_X$ is separable. So by \cite[Theorem 0.2]{HP15b} we are done.
\end{proof}
\section{Results in characteristic $0$}
In this section we work in characteristic $0$.
\begin{definition}
Let $(X,\Delta)$ be a pair and $a:X\dasharrow Y$ a rational map from $X$ to another variety $Y$ . We say that \emph{the non-lc locus of $(X,\Delta)$ dominates} $a(X)$ if there is a divisor $E$ of discrepancy $< -1$ that dominates $a(X)$.
\end{definition}
\begin{theorem} \label{charactnonklt}
Let $(X,\Delta)$ a pair such that $\kappa(K_X+\Delta)=0$. Suppose that the Albanese map $\mathfrak{a}_X:X\dasharrow \mathfrak{Alb}(X)$ (the Albanese morphism $a_X:X\to {\rm Alb}(X)$) of $X$ is not an algebraic fiber space. Then the non-lc locus of $(X,\Delta)$ dominates $\mathfrak{a}_X(X)$ ($a_X(X)$).
\end{theorem}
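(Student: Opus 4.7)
The plan is to argue by contradiction: assume that the non-lc locus of $(X,\Delta)$ does not dominate $a_X(X)$ (respectively $\mathfrak{a}_X(X)$) and derive that the Albanese is an algebraic fiber space. Take a log resolution $\pi\colon\tilde X\to X$ and write
\[
\pi^*(K_X+\Delta)=K_{\tilde X}+\tilde\Theta+N,
\]
where $\tilde\Theta$ is obtained by capping each coefficient of the log pullback at $1$ (so $(\tilde X,\tilde\Theta)$ is lc) and $N\ge 0$ is supported on the prime divisors of discrepancy $<-1$. The hypothesis then says exactly that $N$ does not dominate $a_X(X)$ (resp.\ $\mathfrak{a}_X(X)$) under the induced morphism $\tilde a\colon\tilde X\to\mathrm{Alb}(X)$ (resp.\ $\tilde X\to\mathfrak{Alb}(X)=\mathrm{Alb}(\tilde X)$). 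Since $N\ge 0$, sections of $m(K_{\tilde X}+\tilde\Theta)$ embed into $\pi^*$-pullbacks of sections of $m(K_X+\Delta)$, yielding the upper bound
\[
\kappa(K_{\tilde X}+\tilde\Theta)\ \le\ \kappa(K_X+\Delta)\ =\ 0.
\]

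Next I would analyze the failure of $a_X$ to be an algebraic fiber space structurally. Stein factorize $a_X=g\circ h$ with $g\colon X^\nu\to\mathrm{Alb}(X)$ finite. Three cases arise: (i) $g$ is \'etale of degree $>1$; (ii) $g$ is not surjective; (iii) $g$ is ramified of degree $>1$. Case (i) is excluded by the universal property of $a_X$ applied to $h\colon X\to X^\nu$, which produces a homomorphism $\mathrm{Alb}(X)\to X^\nu$ that is a section of $g$, forcing $g$ to be an isomorphism and contradicting $\deg g>1$. In Case (ii), $Y:=g(X^\nu)\subsetneq\mathrm{Alb}(X)$ still generates $\mathrm{Alb}(X)$, so by Ueno's theorem the quotient $Z:=Y/\mathrm{stab}(Y)$ is a positive-dimensional subvariety of general type in the appropriate quotient abelian variety. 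In Case (iii) the ramification formula gives $K_{X^\nu}\sim R\ne 0$ effective, and classically a normal finite cover of an abelian variety with Kodaira dimension $0$ must be \'etale, so $\kappa(X^\nu)\ge 1$ and the Iitaka fibration of $X^\nu$ (taken on a resolution) supplies a positive-dimensional general type base $Z$. The Albanese map case is handled on a smooth model by the identical dichotomy.

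In the two remaining cases one obtains a dominant rational map $\varphi\colon\tilde X\dashrightarrow Z$ through $\tilde a$, with $Z$ positive-dimensional and of general type. The key auxiliary step is to show that the general fiber $F_\varphi$ of $\varphi$ satisfies $\kappa(K_{F_\varphi}+\tilde\Theta|_{F_\varphi})\ge 0$. One first notes that, on the Stein factorization $\tilde X\to\tilde X^\nu$ of $\tilde a$, the hypothesis on $N$ guarantees that $N$ does not dominate $\tilde X^\nu$ (because $\tilde X^\nu$ is finite over the Albanese image of $\tilde X$), so $N$ restricts trivially to the general fiber of $\tilde X\to\tilde X^\nu$; an effective representative $D_0\sim_{\mathbb{Q}}K_X+\Delta$ (which exists by $\kappa(K_X+\Delta)=0$) then restricts effectively, giving the non-negativity on those fibers. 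This non-negativity is propagated to $F_\varphi$ using Theorem \ref{ps14} together with Lemma \ref{GVandKodairadim}, comparing Kodaira dimensions through the generic vanishing of $\tilde a_*\mathcal{O}_{\tilde X}(m(K_{\tilde X}+\tilde\Theta))$ on $\mathrm{Alb}(X)$ and the Shibata-type description of $V^0$ as a union of torsion translates of abelian subvarieties. Campana's theorem \cite{Campana04} on the Iitaka conjecture over a variety of general type, applied to the lc pair $(\tilde X,\tilde\Theta)$ and the fibration $\varphi$, then delivers
\[
\kappa(K_{\tilde X}+\tilde\Theta)\ \ge\ \dim Z+\kappa(K_{F_\varphi}+\tilde\Theta|_{F_\varphi})\ \ge\ 1,
\]
contradicting the upper bound of $0$. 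The main obstacle is this transfer of non-negativity from the Stein base $\tilde X^\nu$ to the general type quotient $Z$, since $N$ may a priori dominate $Z$ even though it misses the generic point of $\tilde X^\nu$; this is precisely where the generic vanishing of Theorem \ref{ps14} is needed to control the relevant Kodaira dimensions.
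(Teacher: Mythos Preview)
Your overall strategy---reduce to an lc pair on a log resolution, use Campana's subadditivity over a general-type base, and feed in generic vanishing via Lemma~\ref{GVandKodairadim}---is exactly the paper's. The difference, and the place where your argument has a genuine gap, is how you produce the general-type base $Z$ and, more importantly, what structure the general fiber $F_\varphi$ then carries.

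Your case split (i)/(ii)/(iii) is more work than needed and leaves the ``propagation'' step incomplete. Concretely, Lemma~\ref{GVandKodairadim} requires an \emph{algebraic fiber space from $F_\varphi$ to an abelian variety}. In your Case~(ii), $F_\varphi$ maps to the fiber of $\tilde X^\nu\to Z$, which is only \emph{finite} over the abelian variety $\mathrm{stab}(Y)$; in Case~(iii) the Iitaka base of $X^\nu$ gives no abelian-variety fibration on $F_\varphi$ at all. So you cannot invoke Theorem~\ref{ps14}/Lemma~\ref{GVandKodairadim} on $F_\varphi$ as written, and the non-negativity $\kappa(K_{F_\varphi}+\tilde\Theta|_{F_\varphi})\ge 0$ is not established. (The obstacle you flag---that $N$ might dominate $Z$---is real, but the deeper issue is the missing abelian-variety target on the fiber.)

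The paper sidesteps the trichotomy entirely by applying Kawamata's structure theorem \cite[Theorem~13]{Kawamata81} directly to the Stein base $Z$: there is an \'etale cover $Z'\cong B\times W$ with $B$ abelian and $W$ of general type and maximal Albanese dimension. After base change (and resolving $W$ to $V$), the map $Y^\sharp\to Z''=B\times V$ is still an algebraic fiber space, so the general fiber $F_{Y^\sharp/V}$ is fibered over the abelian variety $B$ with general fiber $F_{Y^\sharp/Z''}$. Now the hypothesis that no discrepancy-$<\!-1$ divisor dominates $Z$ (hence $Z''$, by Lemma~\ref{obvious}) forces the horizontal part $(D^\sharp)^{\mathrm{hor}}$ to have coefficients $\le 1$, so $(F_{Y^\sharp/V},(D^\sharp)^{\mathrm{hor}}|_{F_{Y^\sharp/V}})$ is lc and Lemma~\ref{GVandKodairadim} applies cleanly to the fibration $F_{Y^\sharp/V}\to B$. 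Campana then gives $0=\kappa(K_{Y^\sharp}+D^\sharp)\ge \kappa(K_{Y^\sharp}+(D^\sharp)^{\mathrm{hor}})\ge\kappa(V)$, forcing $\dim V=0$; hence $Z'\cong B$, the composite $B\to Z\to\mathrm{Alb}(Y)$ is an isogeny, and Lemma~\ref{Albeq} finishes. In short: replace your trichotomy by a single application of \cite[Theorem~13]{Kawamata81} to the Stein base, and the propagation problem disappears.
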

\begin{lemma}\label{obvious}
Let $f:X\to Z$ be a morphism of varieties. Let $E$ be a divisor over $X$ and $g:Z'\to Z$ a birational morphism.  Then $E$ dominates $Z$ if and only if $E$ dominates $Z'$.
\end{lemma}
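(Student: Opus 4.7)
The proof plan is to reduce both conditions to the same birational model over $X$, where they can be directly compared using only dominance of $g$.

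First I would fix a birational morphism $\mu:X_1\to X$ on which $E$ is an honest prime divisor. The rational map $X_1\dashrightarrow Z'$ obtained from $g^{-1}\circ(f\circ\mu)$ may have indeterminacies, which I resolve by passing to a further birational morphism $\pi:X_2\to X_1$ together with a morphism $h:X_2\to Z'$ satisfying $g\circ h = f\circ\mu\circ\pi$. Write $\tilde E\subseteq X_2$ for the strict transform of $E$. Then ``$E$ dominates $Z$'' translates to $\overline{(f\circ\mu\circ\pi)(\tilde E)} = Z$, while ``$E$ dominates $Z'$'' translates to $\overline{h(\tilde E)} = Z'$; both depend only on the divisorial valuation attached to $E$ and not on the particular resolution, since any two such resolutions are dominated by a third and the transforms of $E$ are identified under the induced birational maps.

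Setting $W := \overline{h(\tilde E)} \subseteq Z'$, commutativity of the diagram yields $\overline{g(W)} = \overline{(f\circ\mu\circ\pi)(\tilde E)}$, so the lemma reduces to the equivalence $W = Z' \iff \overline{g(W)} = Z$. If $W = Z'$, then $\overline{g(W)} = \overline{g(Z')} = Z$, using that $g$ is dominant. Conversely, if $\overline{g(W)} = Z$, then $\dim g(W) = \dim Z = \dim Z'$; since $g|_W:W\to g(W)$ is a morphism we have $\dim W \geq \dim g(W) = \dim Z'$, and because $W$ is an irreducible closed subvariety of $Z'$ it must have full dimension, forcing $W = Z'$.

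There is no substantive obstacle here; the lemma is genuinely ``obvious'' once the models are aligned. The only minor care needed is the well-definedness observation in the first step, which ensures that speaking of ``$E$ dominates $Z'$'' before choosing a morphism $X\to Z'$ makes sense.
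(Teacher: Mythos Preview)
Your proposal is correct and complete. The paper's own proof consists of the single word ``Obvious,'' so you have simply written out the details that the author chose to omit; there is no alternative approach to compare against.
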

\begin{proof}
Obvious.
\end{proof}
\begin{proof}[Proof of Theorem \ref{charactnonklt}]
We first deal with the case of the Albanese map. The proof for the Albanese moprhism is almost the same and will be explained at the end of the proof. Let $\mu:Y\to X$ be a log resolution of $(X,\Delta)$. We can write 
$$K_Y+\tilde{\Delta}=\mu^*(K_X+\Delta)+E_+-E_-$$
where $\tilde{\Delta}$ is the strict transform of $\Delta$, and $E_+$ and $E_-$ are effective exceptional divisors of $\mu$ with no common components. Denote $D:=\tilde{\Delta}+E_-$ and for convenience we denote the Albanese morphism of $Y$ by $a$ as well. We have $\kappa(K_Y+D)=\kappa(K_X+\Delta)=0$ (see \cite[Lemma II.3.11]{Nakayama04}). Let 
\begin{align}\label{SteinfactY}
Y\xrightarrow{g} Z\xrightarrow{h} a_Y(Y)\subseteq {\rm Alb}(Y)
\end{align}
be the Stein factorization of $a_Y$. 
\begin{lemma}\label{Albeq}
If the Albanese morphism of $Z$ is an algebraic fiber space, then so is that of $Y$.
\end{lemma}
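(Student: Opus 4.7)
The plan is to apply the universal property of the Albanese variety in both directions to identify $\mathrm{Alb}(Y)$ with $\mathrm{Alb}(Z)$, and then to read off $a_Y$ as a composition of algebraic fiber spaces via the Stein factorization \eqref{SteinfactY}.

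First, I will apply the universal property of $a_Y$ to the morphism $a_Z\circ g\colon Y\to \mathrm{Alb}(Z)$ to obtain a homomorphism (up to translation) $\phi\colon \mathrm{Alb}(Y)\to \mathrm{Alb}(Z)$ satisfying $\phi\circ a_Y=a_Z\circ g$. In the opposite direction, the Stein factorization gives $a_Y=\iota\circ h\circ g$, where $\iota\colon a_Y(Y)\hookrightarrow \mathrm{Alb}(Y)$ is the inclusion, and hence furnishes a morphism $\iota\circ h\colon Z\to \mathrm{Alb}(Y)$. The universal property of $a_Z$ then produces $\psi\colon \mathrm{Alb}(Z)\to \mathrm{Alb}(Y)$ with $\psi\circ a_Z=\iota\circ h$, and in particular $\psi\circ a_Z\circ g=a_Y$.

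Next, combining these identities I will check that $\phi$ and $\psi$ are mutually inverse (modulo translation). From $\psi\circ\phi\circ a_Y=\psi\circ a_Z\circ g=a_Y$ and the fact that $a_Y(Y)$ generates $\mathrm{Alb}(Y)$, one extracts $\psi\circ\phi=\mathrm{id}_{\mathrm{Alb}(Y)}$. Symmetrically, using that $a_Z$ is surjective (by the algebraic fiber space hypothesis) and that $g$ is surjective (by Stein factorization), the identity $\phi\circ\psi\circ a_Z\circ g=\phi\circ a_Y=a_Z\circ g$ holds on a subset whose image spans $\mathrm{Alb}(Z)$, so $\phi\circ\psi=\mathrm{id}_{\mathrm{Alb}(Z)}$.

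Once $\psi$ is known to be an isomorphism of abelian varieties, the factorization $a_Y=\psi\circ a_Z\circ g$ presents $a_Y$ as the composition of $g$ (surjective with connected fibers by Stein factorization), $a_Z$ (an algebraic fiber space by hypothesis), and an isomorphism, so $a_Y$ is itself an algebraic fiber space. I do not anticipate a serious obstacle: the only delicate point is tracking the ``up to translation'' ambiguity in the universal property, but since translations are automorphisms of abelian varieties and the generation property of the Albanese image is translation-invariant, all the identities above remain valid after replacing $\phi$ and $\psi$ by their linear parts.
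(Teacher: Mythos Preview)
Your proposal is correct and follows essentially the same approach as the paper: both apply the universal property of the Albanese in each direction to produce maps $\phi,\psi$ (the paper's $t,s$) between $\mathrm{Alb}(Y)$ and $\mathrm{Alb}(Z)$, verify that the composites are the identity using that the Albanese images generate, and conclude by writing $a_Y$ as a composition of the algebraic fiber spaces $g$ and $a_Z$ with an isomorphism. The only cosmetic difference is that the paper first passes to a smooth model of $Z$ to ensure $a_Z$ is a morphism, whereas you (reasonably, since $Z$ arises from a Stein factorization and is normal) take this for granted.
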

\begin{proof}
After possibly passing to a smooth model of $Z$ we can assume that the Albanese map of $Z$ is a morphism. By universality, $h$ factors through the Albanese morphism $a_{Z}$ of $Z$, and $a_{Z}\circ g$ factors through $a_{Y}$. So we have the following diagram:
\begin{center}
\begin{tikzpicture}[scale=1.6]
\node (A) at (0,0) {$Y$};
\node (B) at (1.5,0) {${\rm Alb}(Y)$};
\node (C) at (0,1.5) {$Z$};
\node (D) at (1.5,1.5) {${\rm Alb}(Z)$};
\path[->,font=\scriptsize]
(A) edge node[above]{$a_{Y}$} (B)
(C) edge node[above]{$a_{Z}$} (D)
(C) edge node[left]{$h$} (B)
(A) edge node[left]{$g$} (C)
;
\draw[-latex] (D.260) -- node[rotate=0,xshift=10pt] {s} (B.100);
\draw[-latex] (B.70) -- node[rotate=0,xshift=-10pt] {t} (D.290);
\end{tikzpicture} 
\end{center} 
By construction we have that 
\begin{itemize}
\item $s\circ t\circ a_Z\circ g=s\circ h\circ g=s\circ a_Y=a_Z\circ g$;
\item $t\circ s\circ a_Y=t\circ a_Z\circ g=h\circ g=a_Y$.
\end{itemize}
Since $g$ is surjective, we see that $s\circ t={\rm id}_{a_{Z}(Z)}$ and $t\circ s={\rm id}_{a_{Y}(Y)}$. Since $a_{Z}(Z)$ and $a_{Y}(Y)$ generate ${\rm Alb}(Z)$ and ${\rm Alb}(Y)$ respectively we know that $s$ and $t$ are isomorphisms. Moreover since $g$ and $a_Z$ are both algebraic fiber spaces we are done.
\end{proof}
Suppose that no components of $D$ with coefficients $>1$ dominate $a_Y(Y)$, or equivalently, $Z$. By \cite[Theorem 13]{Kawamata81}, there is an \'{e}tale cover $q:Z'\to Z$ such that $Z'\cong B\times W$ where $B$ is an abelian variety, $W$ is birational to a smooth variety which is of general type and of maximal Albanese dimension. Let $Y':=Y\times_ZZ'$. We then take a resolution $\nu:V\to W$ and let $Z''$ and $Y''$ be the corresponding base changes. Let $D':=p^*D$, and $\rho:Y^{\sharp}\to Y''$ be a log resolution of $(Y'',r_*^{-1}D')$, where $r_*^{-1}D'$ is the strict transform of $D'$ on $Y''$. The situation is as follows.
\begin{center}
\begin{tikzpicture}[scale=1.6]
\node (A) at (0,0) {$Y$};
\node (B) at (1.5,0) {$Z$};
\node (D) at (0,1) {$Y'$};  
\node (E) at (1.5,1) {$Z'=B\times W$};
\node (F) at (3,1) {$W$};
\node (J) at (0,-1) {$X$};
\node (K) at (0,2) {$Y''$}; 
\node (L) at (1.5,2) {$Z''=B\times V$};
\node (M) at (3,2) {$V$};
\node (N) at (-1.5,2) {$Y^{\sharp}$};
\path[->,font=\scriptsize]
(A) edge node[above]{$g$} (B)
(D) edge node[above]{$g'$} (E)
(E) edge node[above]{$p_W$} (F)
(D) edge node[left]{$p$} (A)
(E) edge node[left]{$q$} (B)
(A) edge node[left]{$\mu$} (J)
(K) edge node[left]{$r$} (D)
(L) edge node[left]{$s$} (E)
(M) edge node[left]{$\nu$} (F)
(K) edge node[above]{$g''$} (L)
(L) edge node[above]{$p_V$} (M)
(N) edge node[above]{$\rho$} (K)
;
\end{tikzpicture} 
\end{center} 
By construction, $V$ is smooth, of general type and of maximal Albanese dimension. We define $D^{\sharp}$ via the following equality:
$$K_{Y^{\sharp}}+D^{\sharp}=(r\circ\rho)^*(K_{Y'}+D')+E,$$
where $D^{\sharp}$ and $E$ are effective and have no common components. Then $D^{\sharp}$ has snc support as well. Since $p$ is an \'{e}tale cover and $r\circ \rho$ is a birational morphism, by \cite[Lemma II.3.11]{Nakayama04} we have 
$$0=\kappa(K_Y+D)=\kappa(K_{Y^{\sharp}}+D^{\sharp}).$$ 
By Lemma \ref{obvious}, no components of $D^{\sharp}$ with coefficient $\ge 1$ dominate $Z''$. We denote the horizontal part of $D^{\sharp}$ with respect to $g''\circ\rho$ as $(D^{\sharp})^{\rm hor}$. Then the coefficients of $(D^{\sharp})^{\rm hor}$ are in $[0,1]$. By \cite[Theorem 4.2 and Remark 4.3]{Campana04} we know that  
\begin{align}\label{Campana}
\kappa(K_{Y^{\sharp}}+(D^{\sharp})^{\rm hor})\ge \kappa((K_{Y^{\sharp}}+(D^{\sharp})^{\rm hor})|_{F_{Y^{\sharp}/V}})+\kappa(V).
\end{align}
and $\kappa(V)\ge 0$ as $V$ is of general type. 

On the other hand, we have  
\begin{align*}
\kappa(K_{F_{Y^{\sharp}/Z''}}+(D^{\sharp})^{\rm hor}|_{F_{Y^{\sharp}/Z''}})=\kappa(K_{F_{Y^{\sharp}/Z''}}+D^{\sharp}|_{F_{Y^{\sharp}/Z''}})\ge 0.
\end{align*}
So by Lemma \ref{GVandKodairadim}, $\kappa((K_{Y^{\sharp}}+(D^{\sharp})^{\rm hor})|_{F_{Y^{\sharp}/V}})\ge 0$, and by \eqref{Campana} we have $\kappa(K_{Y^{\sharp}}+(D^{\sharp})^{\rm hor})\ge 0$. We also have $\kappa(K_{Y^{\sharp}}+D^{\sharp})=0$, which forces that $\kappa(K_{Y^{\sharp}}+(D^{\sharp})^{\rm hor})=0$. Again by \eqref{Campana}, we obtain $\kappa(V)=0$. But $V$ is of general type by construction, so $V$, hence $W$, has to be a point. Now we consider the composition of maps
$$B\xrightarrow{q} Z\xrightarrow{h}{\rm Alb}(Y).$$
Since $q$ and $h$ are finite and $h(Z)$ generates ${\rm Alb}(Y)$, the composite $h\circ q$ has to be an isogeny, hence $Z$ is birational to an abelian variety. Finally by Lemma \ref{Albeq} we are done.

For the case of the Albanese moprhism, we just do the Stein factorization as in \eqref{SteinfactY} for $a_X$ without taking the resolution, and the rest of the proof is the same.
\end{proof}
%
Now we are going to prove Theorem \ref{HPgen} \ref{partCH} and \ref{partch01}. We first show that the condition on the log plurigenera implies that the Albanese map and the Albanese morphism are surjective. 
\begin{proposition} \label{surjAlb}
Let $(X,\Delta)$ be a projective pair. If one of the following conditions holds:
\begin{enumerate}[label={\rm (\roman*)}]
\item $(X,\Delta)$ is lc, $h^0(X,\mathcal{O}_X(m(K_X+\Delta)))=h^0(X,\mathcal{O}_X(2m(K_X+\Delta)))=1$ for some integer $m\ge 1$. \label{surjcondlc}
\item $(X,\Delta)$ is klt, $h^0(X,\mathcal{O}_X(m(K_X+\Delta)))=1$ for some integer $m\ge 2$. \label{surjcondklt}
\end{enumerate}
then both the Albanese map $\mathfrak{a}_X:X\dasharrow \mathfrak{Alb}(X)$ and the Albanese morphism ${a}_X:X\dasharrow {\rm Alb}(X)$ of $X $ are surjective.
\end{proposition}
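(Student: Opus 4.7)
The plan is to reduce the statement to Theorem \ref{charactnonklt}. In both cases (i) and (ii) the pair $(X,\Delta)$ has empty non-lc locus (there are no divisors of discrepancy $<-1$), so once we establish $\kappa(K_X+\Delta)=0$, Theorem \ref{charactnonklt} immediately yields that both the Albanese map and the Albanese morphism are algebraic fiber spaces, in particular surjective. Since $h^0(X,\mathcal{O}_X(m(K_X+\Delta)))=1$ already gives $\kappa(K_X+\Delta)\ge 0$, the remaining task is to prove $\kappa(K_X+\Delta)\le 0$.

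To establish $\kappa(K_X+\Delta)\le 0$, I plan to argue by contradiction using the Popa-Schnell generic vanishing theorem (Theorem \ref{ps14}) together with the structure result of Shibata (cited in the proof of Lemma \ref{GVandKodairadim}) on $V^0$-loci. Suppose $\kappa(K_X+\Delta)\ge 1$. Let $f=a_X:X\to A=\mathrm{Alb}(X)$ and set $\mathcal{F}_k=f_*\mathcal{O}_X(km(K_X+\Delta))$. Since $(X,\Delta)$ is lc, Theorem \ref{ps14} tells us each $\mathcal{F}_k$ is a GV-sheaf on $A$. Combined with the assumed growth of plurigenera, one expects that $V^0(\mathcal{F}_1)$, which by Shibata's theorem is a finite union of torsion translates of abelian subvarieties of $\hat{A}$ and contains $\mathcal{O}_A$ (as $h^0(\mathcal{F}_1)=1$), must acquire a positive-dimensional component through $\mathcal{O}_A$.

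For case (i), the hypothesis $h^0(X,\mathcal{O}_X(2m(K_X+\Delta)))=1$ then gives the contradiction: for any $P$ in such a positive-dimensional component, pick sections $s_P\in H^0(X,\mathcal{O}_X(m(K_X+\Delta))\otimes f^*P)$ and $s_{-P}\in H^0(X,\mathcal{O}_X(m(K_X+\Delta))\otimes f^*P^{-1})$; their product sits in the one-dimensional space $H^0(X,\mathcal{O}_X(2m(K_X+\Delta)))=\langle s_0^2\rangle$, forcing $(s_P)+(s_{-P})=2(s_0)$ and hence $(s_P)\le 2(s_0)$. This confines the divisor $(s_P)$ to the finite combinatorial set of subdivisors of $2(s_0)$, contradicting the continuous variation of $(s_P)$ with $P$. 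For case (ii) one does not directly control $h^0(2m(K_X+\Delta))$, so the plan is to first pass to the auxiliary pair $(X,\Delta+\tfrac{1}{m}D_0)$, where $D_0$ is the unique effective divisor in $|m(K_X+\Delta)|$. Using that $(X,\Delta)$ is klt and $m\ge 2$, this perturbed pair can be arranged to be lc (possibly after rescaling by the log canonical threshold), and satisfies $K_X+\Delta+\tfrac{1}{m}D_0\sim_{\mathbb{Q}}2(K_X+\Delta)$; the generic vanishing argument above is then applied to it to obtain the same contradiction. The main obstacle I anticipate is the klt case, where extracting the needed upper bound from a single plurigenus value requires the Popa-Schnell/Shibata machinery and a careful perturbation to produce a comparable $2m$-level input to the multiplication argument.
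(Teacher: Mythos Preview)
Your plan of reducing everything to Theorem \ref{charactnonklt} by forcing $\kappa(K_X+\Delta)=0$ has a genuine gap in both cases, and in case~\ref{surjcondklt} it is attempting to prove something false.

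For case~\ref{surjcondlc}, the implication you need---``$\kappa(K_X+\Delta)\ge 1$ forces $V^0(\mathcal{F}_1)$ to have a positive-dimensional component through $\mathcal{O}_A$''---is not justified. Growth of plurigenera concerns $h^0(km(K_X+\Delta))$ for large $k$, whereas $V^0(\mathcal{F}_1)$ depends only on the fixed sheaf $a_{X*}\mathcal{O}_X(m(K_X+\Delta))$; there is no general mechanism linking the two. What the multiplication argument \emph{does} give you (and this is exactly what the paper uses) is that the hypothesis $h^0(2m(K_X+\Delta))=1$ makes $\mathcal{O}_A$ an isolated point of $V^0(\mathcal{F}_1)$. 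The paper then feeds this isolated point into the Fourier--Mukai equivalence: the Artinian summand of $R\hat{S}(D_A(\mathcal{F}_1))$ at $\hat{0}$ corresponds to a unipotent vector bundle summand of $\mathcal{F}_1^\vee$, which forces $\mathrm{Supp}(\mathcal{F}_1)=A$ and hence surjectivity. No statement about $\kappa$ is needed or obtained.

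For case~\ref{surjcondklt} the problem is more serious: the hypothesis $h^0(m(K_X+\Delta))=1$ for a single $m\ge 2$ is compatible with $\kappa(K_X+\Delta)>0$, so your proposed contradiction cannot exist. Your perturbation $(X,\Delta+\tfrac{1}{m}D_0)$ satisfies $K_X+\Delta+\tfrac{1}{m}D_0\sim_{\mathbb{Q}}2(K_X+\Delta)$, but to feed it into the case~\ref{surjcondlc} argument you would need $h^0(2m'(K_X+\Delta))=h^0(4m'(K_X+\Delta))=1$ for some $m'$, and nothing in the hypotheses controls these. The paper treats the genuine case $\kappa(K_X+\Delta)>0$ by an entirely different route: it builds the Iitaka fibration $f:X\to V$, uses it to factor $a_X$ through a quotient abelian variety $S$, and then constructs (via a careful choice of divisors $B_1\in|m_1(K_X+\Delta)|$ and $B_2\in|m_2(K_X+\Delta)-g^*A|$) a divisor $L$ with $K_X+L\equiv_{\rm num}\text{(pullback of nef and big)}+\text{(klt boundary)}$ to which Koll\'ar's relative vanishing applies, yielding $\chi$-invariance of $g_*\mathcal{O}_X(K_X+L)$ under ${\rm Pic}^0(S)$-twists and hence full support on $S$. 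This Chen--Hacon style argument is where the klt assumption and the condition $m\ge 2$ are genuinely used; it cannot be replaced by a reduction to Theorem~\ref{charactnonklt}.
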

\begin{proof}
We first reduce Proposition \ref{surjAlb} to the case of the Albanese map. Assume that Proposition \ref{surjAlb} holds for Albanese maps. We take a log resolution for $(X,\Delta)$ and denote it by $\mu: X'\to X$. Then $\mathfrak{a}_{X'}=a_{X'}$ is a morphism from $X'$ to $\mathfrak{Alb}(X)$. We can write
$$K_{X'}+\mu_*^{-1}\Delta=\mu^*(K_X+\Delta)+E-F,$$
where $E$ and $F$ are effective exceptional $\mathbb{Q}$-divisors on $X'$ which have no common component. Clearly, for every $m\ge 1$, $(X',\mu_*^{-1}\Delta+F+\frac{1}{m}(\lceil mE\rceil-mE))$ has the same type of singularity as $(S,\Delta)$, and
\begin{align*}
& h^0(X,\mathcal{O}_X(m(K_X+\Delta)))=h^0(X',\mathcal{O}_{X'}(\mu^*(m(K_X+\Delta))+\lceil mE\rceil)) \\
=& h^0(X',\mathcal{O}_{X'}(m(K_{X'}+\mu_*^{-1}\Delta+F+\frac{1}{m}(\lceil mE\rceil-mE)))).
\end{align*}
So if $\mathfrak{a}_{X'}$, hence $\mathfrak{a}_{X}$, is surjective, then by Remark \ref{birsurAlb}, $a_X=\alpha\circ \mathfrak{a}_{X}$ is surjective.

Next, we prove part \ref{surjcondlc}. By selecting a proper birational model for $(X,\Delta)$, we can assume that $(X,\Delta)$ is log smooth, and in particular the Albanese map of $X$ is a morphism. We first show that $\mathcal{O}_X$ is an isolated point of $V^0(\mathcal{O}_X(m(K_X+\Delta)))$. The proof of this fact is the same as \cite[Proposition 2.1]{EL97} (cf. \cite[Theorem 17.10]{Kollar95}). Since $h^0(X,\mathcal{O}_X(m(K_X+\Delta)))=1$, $\mathcal{O}_X$ lies in $V^0(\mathcal{O}_X(m(K_X+\Delta)))$. Suppose that it is not an isolated point. By \cite[Theorem 1.3]{Shibata15}, $V^0(\mathcal{O}_X(m(K_X+\Delta)))$ contains a subgroup $S$ of positive dimension, and therefore for each $P\in S$ the image of 
\begin{align} \label{hahael}
\begin{split}
& H^0(X,\mathcal{O}_X(m(K_X+\Delta))\otimes P)\otimes H^0(X,\mathcal{O}_X(m(K_X+\Delta))\otimes P^{\vee}) \\ 
& \to H^0(X,\mathcal{O}_X(2m(K_X+\Delta)))
\end{split}
\end{align}
is non-zero. Since a given divisor in $|2m(K_X+\Delta)|$ has only finitely may irreducible components, it follows that as $y$ varies over the positive dimesional torus $S$, the image of \eqref{hahael} must vary as well. Therefore $h^0(X,\mathcal{O}_X(2m(K_X+\Delta)))>1$, a contradiction. 

By Theorem \ref{ps14}, $a_{X,*}\mathcal{O}_X(m(K_X+\Delta))$ is a GV-sheaf. For convenience, for the rest of the proof we denote $A:={\rm Alb(X)}$, $g:=\dim A$, and denote $a_{X,*}\mathcal{O}_X(m(K_X+\Delta))$ by $\mathcal{F}$. $\mathcal{O}_X$ being an isolated point of $V^0(\mathcal{O}_X(m(K_X+\Delta)))$ implies that there is a summand of $R^0\hat{S}(\mathcal{F})$ supported only at the origin $\hat{0}$ of $\hat{A}$. We have that 
\begin{align*}
R\hat{S}\circ D_A(\mathcal{F})=R^0\hat{S}\circ D_A(\mathcal{F})=(-1_{\hat{A}})^*((D_{\hat{A}}\circ R^0\hat{S}(\mathcal{F}))[-g]),
\end{align*}
where the first equality is by \cite[Theorem 1.2]{Hacon04} and \cite[Theorem A]{PP11}, and the second equality is by \cite[(3.8)]{Mukai81}. So the sheaf $R\hat{S}(D_A(\mathcal{F}))$ has a summand which is supported only at the origin $\hat{0}$. Then by \cite[Example 2.9]{Mukai81} (note also that $\omega_A\cong\mathcal{O}_A$), there is a summand of $\mathcal{F}^{\vee}$ which is a unipotent vector bundle. Therefore by the definition of $\mathcal{F}$, $a_X$ is surjective. 

Now we prove part \ref{surjcondklt}. By part \ref{surjcondlc} (or by Theorem \ref{charactnonklt}), we can assume that $\kappa(K_X+\Delta)>0$. We construct a commutative diagram following \cite[Theorem 1]{CH02}. Let $f:X\to V$ be a birational model of the Iitaka fibration for $K_X+\Delta$. As explained in the last paragraph, we can assume that $(X,\Delta)$ is log smooth. Since $X$ is smooth now, the Albanese map of $X$ is a morphism which we denote by $a_X:X\to {\rm Alb}(X)$. We construct the following diagram:
\begin{center}
\begin{tikzpicture}[scale=1.6]
\node (A) at (0.5,0) {$V$};
\node (C) at (2,0) {$W$};
\node (D) at (3,0) {$S$};  
\node (E) at (0.5,1) {$X$}; 
\node (G) at (2,1) {$Z$};
\node (H) at (3,1) {${\rm Alb}(X)$};
\path[->, font=\scriptsize]
(A) edge node[below]{$h$} (C)
(C) edge node[above]{$j$} (D)
(E) edge node[above]{} (G)
(G) edge node[above]{$i$} (H)
(E) edge node[left]{$f$} (A)
(H) edge node[right]{$p$} (D)
(E) edge node[xshift=4pt, yshift=4pt]{$g$} (C);
\draw[->] (E) edge[out=30,in=150] node[pos=0.5,yshift=7pt] {$a_X$} (H);
\end{tikzpicture} 
\end{center}  
Here $S$ and $p$ are defined in the following way. Denote the general fiber of $f$ by $X_f$. We then have that $(X_f,\Delta|_{X_f})$ is lc (for a proof see \cite[Lemma 2.7]{Wang16} where klt case is shown, and lc case is the same). So by Theorem \ref{charactnonklt}, the Albanese morphism of $X_f$ is an algebraic fiber space. In particular, $a_X(X_f)$ is some translation of an abelian subvariety of ${\rm Alb}(X)$, which we denote by $K$. Then we define $S:={\rm Alb}(X)/K$ and define $p$ to be the quotient morphism. Now we have a rational map from $V$ to $S$, and we can arrange the birational model of the Iitaka fibration of $X$ so that we actually have a morphism from $V$ to $S$. Define $Z:=a_X(X)$, $W:=p(a_X(X))$, $i$ and $j$ the natural inclusions. 
Again, we can arrange $f$ birationally such that $X$ and $V$ admit morphisms to $Z$ and $W$ respectively.

For the rest of the proof we assume that $m=2$. The proof proceeds analogously for any $m\ge 2$. 

We fix a positive integer $m_1$ such that $\dim(|m_1(K_X+\Delta)|)>0$. Replacing $X$ by an appropriate birational model, we may assume that the linear system $|m_1(K_X+\Delta)|$ has base locus of pure codimension 1. For general member $B_1\in |m_1(K_X+\Delta)|$ we write 
\begin{align*}
& B_1=D_1+\sum_{i=1}^ra_iE_i 
\end{align*}
where $D_1$ denotes the free part and $\sum_{i=1}^ra_iE_i $ denotes the base divisor. 
Recall that the Iitaka fibration of $X$ with respect to $K_X+\Delta$ is constructed as the morphism induced by the base-point-free part $|M|$ of $|m(K_X+\Delta)|$ for some $m$. In particular, there is an integer $m_2>0$ and an ample divisor $A$ on $W$ such that $|m_2(K_X+\Delta)-g^*A|$ is not empty.
We fix such an $m_2$ and choose a $B_2\in |m_2(K_X+\Delta)-g^*A|$. We can similarly write
$$B_2=D_2+\sum_{j=1}^sb_jF_j$$
where $D_2$ is the free part and $\{F_j\}$ are the base divisors. We can move $D_1$ and $D_2$ such that $D_{1,2}$ do not coincide with $E_i$, $F_j$ or any component of $\Delta$. By choosing a birational model of $X$ properly, we may assume that ${\rm Supp}(B_1)\bigcup{\rm Supp}(B_2)\bigcup{\rm Supp}(\Delta)$ is simple normal crossing. 

Consider the divisor
$$B:=kB_1+B_2\in |(km_1+m_2)(K_X+\Delta)-g^*A|$$
We want to show that for $k\gg 0$, for any component $P$ in $\lfloor\frac{B}{km_1+m_2}+\Delta\rfloor$,
\begin{align} \label{inequalwithoutfloor}
{\rm mult}_{P}(\frac{B}{km_1+m_2}+\Delta)< {\rm mult}_{P}(\frac{2}{m_1}\sum_{i=1}^ra_iE_i)+1\le {\rm mult}_{P}F+1.
\end{align}
The equality on the right always holds, and this is easily seen as $\frac{m_1}{2}|2(K_X+\Delta)|\subseteq|m_1(K_X+\Delta)|$. For the equality on the left, we proceed by simply observing the multiplicities of each component. We have $B=kD_1+D_2+\sum_{i=1}^rka_iE_i+\sum_{j=1}^sb_jF_j$, and obviously $\lfloor \frac{k}{km_1+m_2}\rfloor=\lfloor \frac{1}{km_1+m_2}\rfloor=0$. Let $\Delta=\sum_ld_l\Delta_l$ where $\Delta_l$ are the irreducible components of $\Delta$ and $d_l\in [0,1)$. The worst case now is that $P$ is a common component of $\{E_i\}$, $\{F_j\}$ and $\Delta$. Then its multiplicity is 
$$\dfrac{ka_i+b_j}{km_1+m_2}+d_l,$$
which is easily seen as strictly less that $\dfrac{2a_i}{m_1}+1$ for $k\gg 0$. So \eqref{inequalwithoutfloor} holds, hence 
$$\lfloor \frac{B}{km_1+m_2}+\Delta\rfloor\preceq F.$$
Define 
$$L:=K_X+2\Delta-\lfloor \frac{B}{km_1+m_2}+\Delta\rfloor.$$ 
We have that 
$$\displaystyle L\equiv_{\rm num}\frac{1}{km_1+m_2}\pi^*H+\{\frac{B}{km_1+m_2}+\Delta\}$$ 
By constuction it is easy to see that for $k\gg 0$, $(X, \{\frac{B}{km_1+m_2}+\Delta\})$ is klt. So $L$ is numerically equivalent to the sum of the pull back of a nef and big $\mathbb{Q}$-divisor on $X$ and a klt $\mathbb{Q}$-divisor on $W$.

Therefore we have that $|2(K_X+\Delta)|=|K_X+L|+\lfloor \frac{B}{km_1+m_2}+\Delta\rfloor$, and $h^0(X,\mathcal{O}_X(K_X+L))=1$. So by \cite[Corollary 10.15]{Kollar95}, $h^i(W,g_*\mathcal{O}_X(K_X+L)\otimes P)=0$ for all $i>0$ and $P\in {\rm Pic}^0(W)$. It follows that 
\begin{align*}
& h^0(W,g_*\mathcal{O}_X(K_X+L)\otimes P)=\chi(W,g_*\mathcal{O}_X(K_X+L)\otimes P) \\
& =\chi(W,g_*\mathcal{O}_X(K_X+L))=h^0(W,g_*\mathcal{O}_X(K_X+L))=1
\end{align*}
for all $P\in {\rm Pic}^0(S)$. We also have that $g_*\mathcal{O}_X(K_X+L)$ is a torsion free coherent sheaf on $X$. So by \cite[Proposition 1.2.1]{CH02}, $g_*\mathcal{O}_X(K_X+L)$ is supported on an abelian subvariety $S'$ of $S$. Since $g_*\mathcal{O}_X(K_X+L)$ is a torsion free and the image of $X$ generates $S$, we see that $S'=S$. Hence $\mathfrak{a}_X$ is surjective.
\end{proof}
\begin{theorem} \label{charactCHlog}
Let $(X,\Delta)$ be a projective pair. Assume one of the following conditions holds:
\begin{enumerate}[label={\rm (\arabic*)}]
\item $(X,\Delta)$ is lc, $h^0(X,\mathcal{O}_X(m(K_X+\Delta)))=h^0(X,\mathcal{O}_X(2m(K_X+\Delta)))=1$ for some integer $m\ge 1$. \label{surjcondlc'}
\item $(X,\Delta)$ is klt, $h^0(X,\mathcal{O}_X(m(K_X+\Delta)))=1$ for some integer $m\ge 2$. \label{surjcondklt'}
\end{enumerate}
Moreover, assume that either $\dim(X)=\dim(\mathfrak{Alb}(X))$ or $\dim(X)=\dim({\rm Alb}(X))$, then $X$ is birational to an abelian variety.
\end{theorem}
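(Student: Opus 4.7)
My strategy is to combine Proposition~\ref{surjAlb} with Theorem~\ref{charactnonklt} and reduce the theorem to establishing $\kappa(K_X+\Delta)=0$. First I pass to a log resolution of $(X,\Delta)$; by the birational-invariance reduction carried out at the beginning of the proof of Proposition~\ref{surjAlb} the conditions on $h^0$ and on the singularities are preserved, while the dimension hypothesis is preserved via the surjection $\mathfrak{Alb}(X)\twoheadrightarrow\mathrm{Alb}(X)$ of Remark~\ref{birsurAlb}. Consequently I may assume $(X,\Delta)$ is log smooth, so that $\mathfrak{a}_X$ and $a_X$ agree with a single morphism $a:X\to A:=\mathrm{Alb}(X)$. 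By Proposition~\ref{surjAlb}, $a$ is surjective; by the dimension hypothesis, $a$ is generically finite. A generically finite surjective morphism to an abelian variety is birational if and only if it is an algebraic fiber space, and since $(X,\Delta)$ has empty non-lc locus in both cases \ref{surjcondklt'} and \ref{surjcondlc'}, Theorem~\ref{charactnonklt} yields the algebraic fiber space conclusion (and hence birationality of $a$) the instant we have $\kappa(K_X+\Delta)=0$.

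The inequality $\kappa(K_X+\Delta)\geq 0$ is immediate from $h^0(m(K_X+\Delta))=1>0$, so the core task is to rule out $\kappa(K_X+\Delta)\geq 1$. Assume for contradiction that $\kappa(K_X+\Delta)\geq 1$, and replay the diagrammatic construction from the proof of Proposition~\ref{surjAlb}\ref{surjcondklt}: form the Iitaka fibration $f:X\to V$, define $K\subseteq A$ as the abelian subvariety generated by $a(X_v)$ for a general fiber $X_v$ of $f$, and set $S:=A/K$ with quotient $p:A\to S$. Generic finiteness of $a$ restricted to $X_v$ forces $\dim K=\dim X_v=\dim X-\kappa$, so $\dim S=\kappa\geq 1$, the image $W=p(a(X))$ equals $S$, and the Stein factor $h:V\to S$ is generically finite surjective. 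The auxiliary divisor construction in that proof then produces a torsion-free GV-sheaf $\mathcal{G}:=g_\ast\mathcal{O}_X(K_X+L)$ on $V$ satisfying $h^0(V,\mathcal{G}\otimes P)=1$ for every $P\in\mathrm{Pic}^0(S)$.

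The principal obstacle is to upgrade this constancy of $h^0$ into a contradiction. I would apply a Fourier-Mukai analysis on $S$ in the spirit of Chen-Hacon~\cite{CH01,CH02}: since $h_\ast\mathcal{G}$ is a GV-sheaf on the positive-dimensional abelian variety $S$ with $\chi(h_\ast\mathcal{G}\otimes P)=h^0(S,h_\ast\mathcal{G}\otimes P)\equiv 1$ on $\mathrm{Pic}^0(S)$, Theorems~\ref{Hac04Vi} and~\ref{ps14}, together with Shibata's torsion-translate structure of $V^0$ (invoked through Lemma~\ref{GVandKodairadim}), should pin down the Fourier-Mukai transform of $D_S(h_\ast\mathcal{G})$ at the origin---and hence the structure of $\mathcal{G}$ itself---tightly enough to be incompatible with the generic finiteness of $h$ paired with the bigness of the Iitaka-fibration divisor class on $V$. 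Matching these constraints is the delicate step. Once the contradiction is obtained, $\kappa(K_X+\Delta)=0$ follows and, by the reductions above, $a$ is birational and $X$ is birational to the abelian variety $A$.
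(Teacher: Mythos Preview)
Your plan diverges from the paper at the crucial point, and the divergence is a genuine gap rather than an alternative route. The paper does \emph{not} attempt to prove $\kappa(K_X+\Delta)=0$. After reducing to a log smooth model and invoking Proposition~\ref{surjAlb} to get a generically finite surjection $a:X\to A=\mathfrak{Alb}(X)$, it bounds the \emph{ordinary} plurigenus of the smooth model: \cite[Lemma 3.1]{CH01} gives $1=P_{m'}(A)\le P_{m'}(X)$, while $\Delta\ge 0$ gives $P_{m'}(X)\le h^0(X,\mathcal{O}_X(m'(K_X+\Delta)))=1$ (with $m'=m$ in case~\ref{surjcondklt'} and $m'=2m$ in case~\ref{surjcondlc'}). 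Thus $P_{m'}(X)=1$ with $m'\ge 2$ and $q(X)=\dim X$, and \cite[Corollary 2]{CH02} finishes. No control of $\kappa(K_X+\Delta)$ is ever needed.

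Your proposed route through Theorem~\ref{charactnonklt} would require $\kappa(K_X+\Delta)=0$, and your argument for this is incomplete. You replay the construction from Proposition~\ref{surjAlb}\ref{surjcondklt} and arrive at $h^0(S,h_*\mathcal{G}\otimes P)=1$ for all $P\in\mathrm{Pic}^0(S)$; but that is exactly the output used in Proposition~\ref{surjAlb} to prove \emph{surjectivity}, not to produce a contradiction. An IT$_0$ sheaf of index~$1$ on a positive-dimensional abelian variety is perfectly consistent (a principal polarization is such a sheaf), so ``generic finiteness of $h$ paired with bigness on $V$'' does not obviously obstruct anything, and you acknowledge the step is not carried out. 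Moreover, the divisor construction you invoke is specific to the klt case (it needs $(X,\{\tfrac{B}{km_1+m_2}+\Delta\})$ klt to apply \cite[Corollary 10.15]{Kollar95}); you give no argument for $\kappa=0$ under hypothesis~\ref{surjcondlc'}. The paper's approach sidesteps all of this by passing to the absolute plurigenera of the resolution and quoting the smooth Chen--Hacon theorem.
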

\begin{proof}[Proof of Theorem \ref{charactCHlog}]
We only show the case of $\dim(X)=\dim(\mathfrak{Alb}(X))$, and the proof for the Albanese morphism is the same. After possibly taking a log resolution for $(X,\Delta)$ we can assume that $(X,\Delta)$ is log smooth. In particular, $q(X)=\dim (X)$. By Proposition \ref{surjAlb}, $\mathfrak{a}_X$ is surjective. This combining with the condition $\dim({\rm Alb}(X))=\dim(X)$ implies that $\mathfrak{a}_X$ is generically finite onto $\mathfrak{Alb}(X)$. 

Next by \cite[Lemma 3.1]{CH01} (by letting $Y=A=C=\mathfrak{Alb}(X)$ there) we get that 
$$1=P_{m'}(\mathfrak{Alb}(X))\le P_{m'}(X)\le h^0(X, \mathcal{O}_X(m'(K_X+\Delta)))=1.$$ 
Here $m'=2m$ in \ref{surjcondlc'} and $m'=m$ in \ref{surjcondklt'}. This forces that $P_{m'}(X)=1$, so by \cite[Corollary 2]{CH02} we are done.
\end{proof}
\bibliographystyle{alpha}
\bibliography{P}  
\end{document}